\newcommand{\Aut}{\mathop{\rm Aut}\nolimits}
\newcommand{\St}{\mathop{\rm Stab}\nolimits}
\newcommand{\A}{\mathcal A}
\newcommand{\X}{\mathcal X}
\newcommand{\Y}{\mathcal Y}
\newcommand{\LL}{\mathcal L}
\newcommand{\Z}{\mathbb Z}
\newcommand{\R}{\mathbb R}
\newcommand{\Sym}{\mathop{\rm Sym}\nolimits}
\newcommand{\Orb}{\mathop{\rm Orb}\nolimits}
\newtheorem{theorem}{Theorem}[section]
\newtheorem{corollary}[theorem]{Corollary}
\newtheorem{remark}[theorem]{Remark}
\newtheorem{proposition}[theorem]{Proposition}
\newtheorem{lemma}[theorem]{Lemma}
\newtheorem{definition}{Definition}
\newtheorem{question}{Question}
\newtheorem{example}{Example}
\begin{document}
\title{Ergodic decomposition of group actions on rooted trees}

\author{Rostislav Grigorchuk\footnote{The first author was partially supported by the NSF grant DMS-1207699}\\
               Department of Mathematics,\\
               Texas A\&M University,\\
               College Station, TX, 77843\\
               \href{mailto:grigorch@math.tamu.edu}{grigorch@math.tamu.edu}\\
               \and
        Dmytro Savchuk\footnote{The second author was partially supported by the New Researcher Grant and the Proposal Enhancement Grant from USF Internal Awards Program.}\\
               Department of Mathematics and Statistics\\
               University of South Florida\\
               4202 E Fowler Ave\\
               Tampa, FL 33620-5700\\
               \href{mailto:savchuk@usf.edu}{savchuk@usf.edu}
}


\maketitle

\abstract{We prove a general result about the decomposition on ergodic
components of group actions on boundaries of spherically homogeneous
rooted  trees. Namely, we identify the space of ergodic components with
the boundary of the orbit tree associated with the action, and show that
the canonical system of ergodic invariant probability measures coincides
with the system of uniform measures on the boundaries of minimal
invariant  subtrees of the tree.

A special attention is given to the case of groups generated by  finite
automata. Few examples, including the lamplighter group,  Sushchansky
group, and the, so  called, Universal group are considered in order to
demonstrate applications of the theorem.}

\section*{Introduction}

The ergodic decomposition theorem is one of the most important and frequently used theorems in dynamical systems and ergodic theory. It was initiated by von Neumann, Bogolyubov and Krylov but, perhaps, its first precise form was given by Rokhlin~\cite{rokhlin:ergodic_decomp49}, where he introduced the class of measure spaces now called the Lebesgue spaces.

At first, the ergodic theorem was proved for the case of one automorphism of a Lebesgue space or a one parameter family of such automorphisms, which corresponds to the actions of groups $\mathbb Z$ or $\mathbb R$ respectively. Later, the theorem was extended to the case of countable groups and locally compact groups (and further generalizations were made including passing from finite to infinite measures, from invariant to quasi-invariant measures, and from locally compact groups to some classes of non-locally compact groups~\cite{bufetov:ergodic_decomposition14}).

In 1961 Varadarajan~\cite{varadarajan:groups_of_automorphisms63} (see also Farrell~\cite{farrell:representations_invar_measures62}) proved the ergodic decomposition theorem in the topological setting, namely when a group $G$ acts on a Polish space by homeomorphisms. Varadarajan's theorem (Theorem~\ref{thm:erg_decomp_general} in Section~\ref{sec:prelim}) describes ergodic decomposition for each $G$-invariant probability measure.

The main goal of this article is to show how Varadarajan's theorem works in the situation when a group $G$ acts by automorphisms on a spherically homogeneous rooted tree $T$ and, consequently, by homeomorphisms on its boundary $\partial T$ (which is homeomorphic to the Cantor set as soon as the tree has infinitely many ends). For any such action of $G$ each level of the tree $T$ is an invariant subset. The uniform probability measure $\mu_T$ on $\partial T$ is invariant with respect to the whole group $\Aut(T)$ of automorphisms of the tree and the ergodicity of the system $(G,\partial T,\mu_T)$ is equivalent to level transitivity of the action $(G,T)$~\cite[Proposition~6.5]{gns00:automata} (and also is equivalent to unique ergodicity).
This situation has also a direct connection to the theory of profinite groups. Namely, if
a group $G$ acts transitively on the levels of a tree, then its closure $\overline{G}$ in $\Aut(T)$, which is a profinite group, acts transitively on the boundary $\partial T$, and the uniform measure $\mu_T$ becomes the image of the Haar measure on $G$. In this case, the dynamical system $(G,\partial T,\mu_T)$ is isomorphic to the system $(G, \overline{G}/P,\mu_T)$, where $P =\mathrm{stab}_{\overline{G}}(\xi)$ is the stabilizer of point $\xi\in\partial T$ under the action of $\overline{G}$. The converse is also true in the following sense. By the result of Mackey~\cite{mackey:ergodic64}, any action $(G,X,\mu)$ with pure point spectrum, where $G$ is a countable group acting faithfully on $X$ by transformations preserving the probability measure $\mu$, is isomorphic to the action of type $(G',K/P,\lambda)$, where $K$ is a profinite group, $G'$ is a subgroup of $K$ isomorphic to $G$, and $P$ is a closed subgroup of $K$.
In turn, as shown in Theorem 2.9 in~\cite{grigorch:dynamics11eng}, the latter action is isomorphic to the action $(G,\partial T,\nu)$, where a spherically homogeneous rooted tree $T$ is constructed as the coset tree of a family of open subgroups of $K$ whose intersection is $P$, and $\nu$ is the uniform measure on $\partial T$. Therefore, the profinite case in Mackey's theorem corresponds precisely to the action on rooted trees.

In the case when the action of $G$ on $T$ is not level transitive the situation is more complicated. In order to decompose $\mu_T$ into ergodic components and describe all $G$-invariant ergodic probability measures on $\partial T$ one needs to know the structure of the \emph{orbit tree} $T_G$ whose vertices are orbits of $G$ on the set of vertices $V(T)$ of $T$ and the adjacency relation is induced by the adjacency in $T$ (i.e., $T_G$ is simply the quotient graph of $T$ under the action of $G$). This tree was used in~\cite{gawron_ns:conjugation} in order to give a criterion for establishing when two elements are conjugate in $\Aut(T)$, as well as recently in~\cite{klimann:finiteness} to deal with the finiteness problem in automaton groups generated by invertible-reversible automata. In Theorem~\ref{thm:EI_homeo_bndry} we show that the boundary $\partial T_G$ of the orbit tree can be naturally identified with the space of ergodic components of the action of $G$ on $\partial T$: there is a bijection between $\partial T_G$ and the set of the minimal invariant subtrees of $T$, and uniform probability measures on boundaries of these trees are exactly all ergodic invariant probability measures for the system $(G,\partial T)$. In Section~\ref{sec:examples} we apply the obtained results to get the ergodic decompositions for actions of some groups generated by finite automata.

The class of automaton groups possesses a number of interesting and unusual algebraic and dynamical properties. There are many examples showing that even simple automata (with a small number of states and an alphabet consisting of just two symbols) demonstrate very complicated algebraic, combinatorial, and dynamic behavior~\cite{grigorch:burnside,gns00:automata,grigorch_z:basilica,bondarenko_gkmnss:full_clas32_short,grigorch_s:hanoi_spectrum}.

After considering simple examples of ergodic decompositions of actions of finite groups and level transitive actions, we deal with Sushchansky infinite $p$-groups~\cite{sushch:burnside} in Subsection~\ref{ssec:sushch} and the universal group~\cite{grigorch:solved} for the family of groups $G_{\omega}$ from~\cite{grigorch:degrees} in Subsection~\ref{ssec:univ_grigorch}.

The most complicated example is studied in Subsection~\ref{ssec:lamp} and deals with the 2-state automaton over 2-letter alphabet generating the lamplighter group $\LL$. The automaton presentation of $\LL$ was found in~\cite{gns00:automata} and was used in~\cite{grigorch_z:lamplighter} to compute the spectrum of the discrete Laplacian, which happened to be purely discrete. This automaton presentation of $\LL$ is given on a binary tree, which by Lemma~3 in~\cite{bondarenko_gkmnss:full_clas32_short} implies that its action on this tree is spherically transitive. Therefore by Proposition 6.5 in~\cite{gns00:automata} there is only one (ergodic) invariant probability measure on $\partial T$. However, it is more interesting in this case to consider the ergodic decomposition of actions of subgroups of $\LL$ that do not act level transitively. In particular, we give a complete description of such decompositions for cyclic subgroups $\langle a\rangle$ and $\langle b\rangle$, where $a$ and $b$ are the automorphisms of the tree corresponding to the states of the generating automaton. In order to get the structure of the orbit trees in these cases, we explicitly describe how each orbit looks like using the representation of the lamplighter group by functions that act on formal power series.

We hope that the considerations initiated in this article will be useful for further investigations of group actions on trees and for solving the classification problems started in~\cite{gns00:automata,bondarenko_gkmnss:full_clas32_short}.

The structure of the paper is the following. In Section~\ref{sec:prelim} we recall basic definitions and set up the notation. The main theorem is proved in Section~\ref{sec:theorem}. We conclude the paper with several particular examples in Section~\ref{sec:examples}.

\noindent \textbf{Acknowledgement.} The authors would like to thank the referee for the very detailed careful review with numerous suggestions that greatly improved the exposition of the paper.

\section{Preliminaries}
\label{sec:prelim}

In this paper we will deal only with rooted trees, i.e. the trees with a distinguished vertex called the \emph{root}. For each such tree $T$ and $n\geq 0$ the set $[T]_n$ of vertices of $T$ at combinatorial distance $n$ from the root is called the $n$-th \emph{level} of $T$. For each vertex $v$ of $T$ of the $n$-th level the vertices of the $(n+1)$-st level adjacent to $v$ are called the \emph{children} of $v$. We will visualize the rooted trees as growing down with the root on top. In this visualization the children of a vertex are the vertices that are right below it.

For each rooted tree $T$ the boundary $\partial T$ of $T$ is defined as the set of all infinite paths in $T$ starting from the root that do not have backtracking. A tree is called \emph{spherically homogeneous} if the degrees of all vertices of each level coincide (but this common degree may depend on the level). A special very important class of spherically homogeneous rooted trees is the class of regular rooted trees. A rooted tree is called \emph{regular} if each vertex of the tree has the same number of children. If each vertex has $d$ children, the tree is called $d$-regular rooted tree (or $d$-ary rooted tree) and is denoted by $T_d$. The tree $T_2$ is called \emph{binary} and is depicted in Figure~\ref{fig:binary_tree}.

\begin{figure}[h]
\begin{center}
\epsfig{file=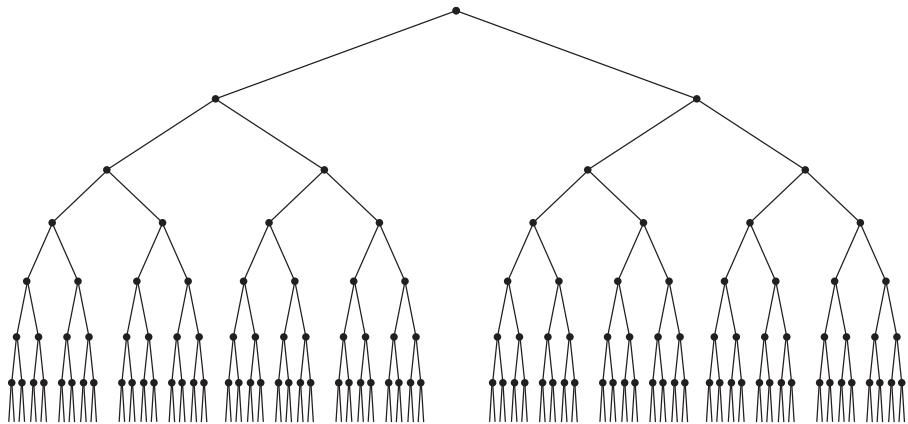,width=340pt}
\end{center}
\caption{Binary tree $T_2$\label{fig:binary_tree}}
\end{figure}

The class of regular rooted trees naturally arises in symbolic dynamics. Indeed, let $\Sigma$ be a finite alphabet of cardinality $d$. We will denote by $\Sigma^*$ and $\Sigma^{\omega}$ the sets of all finite and infinite words over $\Sigma$, respectively. The set $\Sigma^*$ can be naturally identified with the set of vertices of the $d$-ary rooted tree $T_d$, where the empty word $\emptyset\in\Sigma^*$ corresponds to the root of the tree and words $v$ and $vx$ for $v\in\Sigma^*$ and $x\in\Sigma$ are declared to be adjacent. With this identification between $\Sigma^*$ and $T_d$ the set $\Sigma^\omega$ is naturally identified with the boundary $\partial T_d$. The set $\Sigma^n$ of words over $\Sigma$ of length $n$ constitutes the $n$-th level of $\Sigma^*$. For a word $v\in\Sigma^*\cup\Sigma^\omega$ we will denote by $|v|\in\mathbb N\cup\{0,\infty\}$ the length of $v$.

For a rooted tree $T$ and point $\xi\in\partial T$ we denote by $[\xi]_n$ the vertex of $T$ located on a path $\xi$ at the distance $n$ from the root. In the particular case of the regular rooted tree $T=\Sigma^*$ and $\xi=x_1x_2x_3\ldots$ for $x_i\in\Sigma$ we have $[\xi]_n=x_1x_2\ldots x_n$.

Let $G$ be a group acting on a rooted tree $T$ by automorphisms preserving the root. Then, this action preserves the levels of the tree. We say that $(G,T)$ is \emph{spherically transitive} if it is transitive on each level of $T$. A necessary condition for the action to be spherically transitive is that the tree $T$ has to be spherically homogeneous.

In our study the central role is played by the following notion.

\begin{definition}
The \emph{orbit tree} $T_{G}$ for the action of $G$ on a rooted tree $T$ (i.e. on the set of vertices $V(T)$ of $T$) is the graph whose vertices correspond to the orbits of $G$ on the levels of $T$, in which two orbits are adjacent if and only if they contain vertices that are adjacent in $T$.
\end{definition}

It follows directly from the definition that the orbit tree $T_G$ is again a rooted tree with the root corresponding to the 1-element orbit consisting of the root of $T$. Indeed, suppose vertices $v$ and $w$ belong to the $n$-th level $[T]_n$ of $T$ and let $v'$ and $w'$ be vertices of $[T]_{n-1}$ adjacent to $v$ and $w$, respectively. If $v$ and $w$ belong to the same orbit of $G$, then there is $g\in G$ that moves $v$ to $w$. In this case the same $g$ necessarily moves $v'$ to $w'$. Thus, each vertex of the $n$-th level of $T_G$ is adjacent to exactly one vertex of the previous level. However, $T_G$ may be not spherically homogeneous even if $T$ is spherically homogeneous. Orbit trees in various forms have been studied earlier (see, for example,~\cite{gawron_ns:conjugation,bondarenko_s:sushch,klimann:finiteness,klimann_ps:3state}). They describe the partition of the set of vertices of a rooted tree into transitive components under the action of a group.

There is a natural map $\psi\colon V(T)\to V(T_{G})$ that sends a vertex of $T$ to its orbit viewed as a vertex of $T_{G}$. This map naturally extends to a continuous map $\psi\colon \partial T\to\partial T_{G}$ with respect to the topologies that we define below.

The boundary $\partial T$ of a rooted tree may be viewed as an (ultra)metric space as follows: fix a monotonically decreasing sequence $\{\lambda_n\}_{n\geq0}$ converging to 0 and define the distance of two points in $\partial T$ to be equal to $\lambda_k$,  where $k$ denotes the length of the longest common part of the two (geodesic) paths connecting the root to each of them.
This metric defines a topology on $\partial T$ that in the case of a spherically homogeneous rooted tree can be constructed in the following way. The set of vertices of a spherically homogeneous rooted tree $T_{\{\Sigma_n, n\geq0\}}$ can be identified with
\[\bigcup_{n\geq 0}\prod_{i=0}^n\Sigma_i,\]
where $\Sigma_n$ is an alphabet of cardinality that is equal to the number of children of each vertex of level $n$. The boundary of $\partial T_{\{\Sigma_n, n\geq0\}}$ of this tree is naturally identified with $\prod_{n\geq 0}\Sigma_n$ that is endowed with a Tychonoff product topology (when using the discrete topologies on $\Sigma_n$, $n\geq 0$). The topological structure induces the Borel structure on $\partial T$. In the case of spherically homogeneous tree $T$ one can construct the uniform probability measure $\mu_T$ on $\partial T$ by defining
\[\mu_T(C_v)=\frac1{|[T]_{|v|}|},\]
where for a vertex $v\in V(T)$ the cylindrical set $C_v$ consists of all infinite paths in $\partial T$ that go through $v$. This is the measure whose existence and uniqueness is proved in the Kolmogorov consistency (also called extension, or existence) theorem~\cite{kolmogoroff:grundbegriffe,parthasarathy:prob_measures67}. In the case of a regular tree the uniform probability measure on its boundary coincides with the Bernoulli measure.

\begin{lemma}
The map $\psi\colon \partial T\to\partial T_{G}$ is a continuous surjective map.
\end{lemma}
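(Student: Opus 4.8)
The plan is to first make the boundary extension of $\psi$ explicit, then deduce continuity from a direct computation of preimages of cylinders, and finally obtain surjectivity from compactness of $\partial T$. Recall that a point $\xi\in\partial T$ is the same datum as the nested sequence of its prefixes $[\xi]_0,[\xi]_1,[\xi]_2,\dots$ with $[\xi]_n\in[T]_n$ and $[\xi]_{n+1}$ a child of $[\xi]_n$. Since $G$ acts by level-preserving automorphisms, if $[\xi]_{n+1}$ is a child of $[\xi]_n$ then the orbit $\psi([\xi]_{n+1})$ is adjacent in $T_G$ to the orbit $\psi([\xi]_n)$; moreover, as observed right after the definition of $T_G$, each vertex of $[T_G]_{n+1}$ is adjacent to a unique vertex of $[T_G]_n$, so $\bigl(\psi([\xi]_n)\bigr)_{n\ge 0}$ is a genuine non-backtracking ray from the root of $T_G$. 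We take $\psi(\xi)$ to be this ray, so that $[\psi(\xi)]_n=\psi([\xi]_n)$ for every $n$.

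For continuity I would argue with the cylinder description of the topology. Fix $\bar v\in V(T_G)$ lying at level $n$, i.e.\ an orbit of $G$ on $[T]_n$. Since $\psi(\xi)\in C_{\bar v}$ exactly when $[\psi(\xi)]_n=\bar v$, that is, when $[\xi]_n$ belongs to the orbit $\bar v$, we get
\[
\psi^{-1}(C_{\bar v})=\bigcup_{\substack{v\in[T]_n\\ \psi(v)=\bar v}}C_v,
\]
a finite union of cylinders of $\partial T$, hence open. As the cylinders $C_{\bar v}$ form a basis of the topology of $\partial T_G$, this shows $\psi$ is continuous. (Equivalently, $\psi$ is $1$-Lipschitz for the ultrametrics built from $\{\lambda_n\}$, since $[\xi]_k=[\eta]_k$ forces $[\psi(\xi)]_k=[\psi(\eta)]_k$.)

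For surjectivity --- the only step requiring care, since a priori the level-by-level choices of vertices lying in prescribed orbits might fail to assemble into a single ray of $T$ --- I would invoke compactness. A spherically homogeneous tree is locally finite, so $\partial T$ is compact and therefore $\psi(\partial T)$ is a closed subset of $\partial T_G$; it thus suffices to check that it is dense. Given any basic open set $C_{\bar v}$ with $\bar v$ at level $n$, the orbit $\bar v$ is a non-empty subset of $[T]_n$; choose $v\in\bar v$ and any $\xi\in C_v$ (possible because $T$ has no leaves). Then $[\psi(\xi)]_n=\psi(v)=\bar v$, so $\psi(\xi)\in C_{\bar v}$, which shows $\psi(\partial T)$ meets every non-empty basic open set. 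A closed dense subset is the whole space, so $\psi$ is onto. (Equivalently, one may apply König's lemma to the infinite, finitely branching subtree $\bigcup_n\{v\in[T]_n:\psi(v)=\bar v_n\}$ determined by a boundary ray $(\bar v_n)_{n\ge0}$ of $T_G$ to produce a ray of $T$ mapping onto it.) Everything else is routine cylinder bookkeeping.
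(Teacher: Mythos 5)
Your continuity argument is exactly the paper's: the preimage of a cylinder $C_{\mathcal O}$, for $\mathcal O$ an orbit at level $n$, is the (finite) union of the cylinders $C_v$ over $v\in\mathcal O$, hence clopen, and these cylinders form a basis of $\partial T_G$. Where you genuinely add something is surjectivity: the paper's proof stops after continuity and never addresses it, whereas you correctly flag that the level-by-level choices need not obviously cohere into a ray of $T$ and close the gap in two equivalent ways --- either $\psi(\partial T)$ is compact hence closed, and it is dense because every $C_{\mathcal O}$ contains $\psi(\xi)$ for any $\xi$ passing through some $v\in\mathcal O$ (using that $T$ has no leaves); or K\"onig's lemma applied to the infinite, finitely branching subtree $\bigcup_n\psi^{-1}(\bar v_n)$ over a ray $(\bar v_n)$ of $T_G$ (here one should note that the parent of any vertex in $\psi^{-1}(\bar v_{n+1})$ lies in $\psi^{-1}(\bar v_n)$, by uniqueness of the parent orbit). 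Both arguments use local finiteness, which holds under the paper's standing assumption that levels are finite. Your write-up is therefore correct and strictly more complete than the one in the paper.
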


\begin{proof}
A basis of the topology in $\partial T_{G}$ consists of cylindrical sets $C_{\mathcal O}, \mathcal O\in V(T_{G})$ consisting of all infinite paths in $T_G$ that go through a vertex $\mathcal O$ of $T_{G}$ (i.e., $\mathcal O$ represents an orbit of $G$ on some level of $T$). Therefore,
\[\psi^{-1}(C_{\mathcal O})=\bigcup_{v\in\mathcal O}C_v\]
is open in $\partial T$ (in fact, it is clopen) and, hence, $\psi$ is continuous.
\end{proof}

Similarly to the boundary of a rooted tree, the whole group $\Aut(T)$ of all automorphisms of a rooted tree $T$ can be naturally endowed with a topology, induced by the metric $\lambda(\alpha,\gamma)=\lambda_k$, where $\{\lambda_n\}$ is again any monotonically decreasing sequence converging to 0, and $k$ is the largest number of the level of $T$ on which the actions of the automorphisms $\alpha$ and $\gamma$ coincide. Note, that the topology defined by such metric does not depend on the choice of $\{\lambda_n\}$.


By a \emph{measure} on a standard Borel space $X$ we will mean a non-zero Borel measure on $X$. A measure $\mu$ on $X$ is called \emph{probability measure} if $\mu(X)=1$. With the above described topology $\Aut(T)$ is a compact totally disconnected group (hence, a \emph{profinite group}, i.e. a group isomorphic to the inverse limit of an inverse system of discrete finite groups) acting on $\partial T$ and, in the case when $T$ is spherically homogeneous, preserving the uniform probability measure $\mu_T$. Moreover, the converse is true in the following sense.

\begin{proposition}[\mbox{see~\cite[Proposition 2]{grigorch:jibranch}}]
Every countably based profinite group is isomorphic to a closed subgroup of $\Aut(T)$ for some spherically homogeneous rooted tree $T$.
\end{proposition}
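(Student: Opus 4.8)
The plan is to realize $K$ as the group of left translations acting on the coset tree of a suitable chain of open normal subgroups. Since $K$ is profinite it has a base of open normal subgroups at the identity, and since it is countably based we may choose a descending chain $K = N_0 \supseteq N_1 \supseteq N_2 \supseteq \cdots$ of open normal subgroups with $\bigcap_{n\geq 0} N_n = \{1\}$ (take an open normal subgroup $V_n$ contained in the $n$-th basic neighborhood of $1$ and set $N_n = V_1\cap\cdots\cap V_n$). Each $N_n$ is open in the compact group $K$, hence of finite index, so in particular $[N_n:N_{n+1}]<\infty$ for every $n$.

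Next I would build the tree $T$ whose $n$-th level $[T]_n$ is the coset space $K/N_n$ (so $[T]_0$ is a single vertex, the root), declaring the coset $gN_{n+1}$ to be a child of $gN_n$; this is well defined since $N_{n+1}\subseteq N_n$, and every vertex of level $n$ has exactly $[N_n:N_{n+1}]$ children, so $T$ is spherically homogeneous (and locally finite). The group $K$ acts on $V(T)$ by left translation, $h\cdot gN_n = hgN_n$; this is an action by root-preserving automorphisms, since it permutes each level and respects the child relation, and it therefore defines a homomorphism $\phi\colon K\to\Aut(T)$.

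It then remains to check that $\phi$ is a continuous injection onto a closed subgroup. For injectivity, note that $\phi(h)$ is the identity if and only if $h$ acts trivially on every level, i.e. $hgN_n=gN_n$ for all $g$ and $n$, so $\ker\phi=\bigcap_n\bigcap_{g\in K} gN_ng^{-1}=\bigcap_n N_n=\{1\}$, using normality of the $N_n$. For continuity it suffices to check preimages of a neighborhood base of the identity in $\Aut(T)$, namely the pointwise stabilizers $\St_{[T]_n}$ of the levels: one computes $\phi^{-1}(\St_{[T]_n}) = \{h\in K : hgN_n = gN_n\ \forall g\} = \bigcap_{g\in K} gN_ng^{-1} = N_n$, which is open in $K$. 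Since $K$ is compact and $\Aut(T)$ is Hausdorff, $\phi$ is a homeomorphism onto its image and $\phi(K)$ is compact, hence closed, in $\Aut(T)$; thus $K\cong\phi(K)$ is a closed subgroup of $\Aut(T)$, as desired.

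I do not expect any genuine obstacle here: this is the standard profinite coset-tree construction, and the only points demanding a little care are the use of normality of the $N_n$ (so that the level stabilizers pull back exactly to the $N_n$, which is what forces both continuity and triviality of the kernel) and the compactness argument upgrading the continuous bijection onto its image to a topological isomorphism with closed image. One could additionally remark, although it is not needed for the statement, that $\partial T$ is naturally identified with $\varprojlim K/N_n\cong K$, and under this identification the action of $\phi(K)\leq\Aut(T)$ on $\partial T$ becomes left translation of $K$ on itself.
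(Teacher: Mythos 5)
Your proof is correct and follows essentially the same route as the paper: both realize the group as translations acting on the coset tree of a descending chain of finite-index open subgroups with trivial intersection. The only differences are minor --- the paper uses right multiplication on cosets of not-necessarily-normal open subgroups (faithfulness still follows since the kernel sits inside $\cap_n V_n$), whereas you insist on normality, and you additionally spell out the continuity and compactness argument showing that the image is closed, a point the paper leaves implicit.
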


\begin{proof}
Let $G$ be a countably based profinite group. By definition it has a countable descending sequence $G=V_0>V_1>V_2>\cdots$ of finite index open subgroups with trivial intersection. Then $G$ acts faithfully by automorphisms on the, so-called, \emph{coset tree} $T$ of the sequence $\{V_n\}_{n\geq 0}$ constructed as follows. The vertices of $T$ correspond to the cosets of $V_n$ in $G$ for all $n\geq 0$. Two vertices corresponding to cosets $V_{n}g$ and $V_{n+1}h$ are adjacent if and only if $V_{n}g\supset V_{n+1}h$. Then $G$ acts on $T$ by automorphisms simply by right multiplication: an element $g\in G$ sends $V_nh$ to $V_nhg$. This action is clearly faithful since the kernel is equal to the trivial $\cap_{n\geq 0}V_n$.
\end{proof}


Let $G$ be a locally compact group acting on a standard Borel space $X$ by transformations preserving a probability measure $\mu$. Measure $\mu$ is called \emph{ergodic} if the measure of each $G$-invariant Borel set in $X$ is either 0 or 1.
We denote by $\mathcal M_G$ the space of all invariant probability measures on $X$ and by $\mathcal{M}^e_G$ the set of all ergodic invariant probability measures on $X$. Both $\mathcal M_G$ and $\mathcal{M}^e_G$ are Borel subsets of the standard Borel space $P(X)$ of all probability measures on $X$. Recall that $P(X)$ is endowed with the \emph{weak topology} (sometimes called \textit{weak$^*$ topology}): a sequence of measures $\mu_n\in P(X)$ \emph{weakly converges} to a measure $\mu\in P(X)$ if for each bounded continuous function $f\colon X\to\mathbb R$ we have
\[\int f\,d\mu_n\to\int f\,d\mu,\quad n\to\infty.\]
In the case when $G$ is a countable discrete group, an invariant measure in $\mathcal M_G$ is ergodic if and only if it is an extreme point in the (Choquet) simplex $\mathcal{M}_G$, i.e. it cannot be written as a convex combination of other invariant measures from $\mathcal{M}_G$ with non-zero coefficients. However, this is not true for general locally compact groups as Kolmogorov's example shows~\cite{grigorch_h:amenability_ergodic_top_groups,bufetov:ergodic_decomposition14,fomin:measures1950}.

The ergodic decomposition theorem due to Varadarajan~\cite{varadarajan:groups_of_automorphisms63} stated as in Kechris Miller\cite{kechris_m:topics_in_orbit_equivalence04} (see also~\cite{farrell:representations_invar_measures62}) states:

\begin{theorem}
\label{thm:erg_decomp_general}
For a locally compact second countable group $G$ let $X$ be a standard Borel $G$-space and let $\mathcal M_G$ and $\mathcal{M}^e_G$ be the spaces of all invariant probability measures on $X$ and ergodic invariant probability measures on $X$, respectively. Suppose $\mathcal M_G\neq\emptyset$. Then $\mathcal{M}^e_G\neq\emptyset$ and there is a Borel surjection $\pi\colon X\to \mathcal{M}^e_G$ such that
\begin{itemize}
\item[1)] $\pi$ is $G$-invariant (i.e., $\pi$ is constant on each orbit of $G$),
\item[2)] For $\nu\in\mathcal{M}^e_G$ the set $X_{\nu}=\{x\in X\colon \pi(x)=\nu\}$ satisfies $\nu(X_\nu)=1$ and the action $G\curvearrowright X_{\nu}$ has a unique invariant measure, namely $\nu$, and
\item[3)] if $\mu\in\mathcal M_G$, then $\mu=\int\pi(\xi)\,d\mu(\xi)$.
\end{itemize}
Moreover, $\pi$ is uniquely determined in the sense that, if $\pi'$ is another such map, then the set $\{x\in X\colon\pi(x)\neq\pi'(x)\}$ has measure zero with respect to all measures in $\mathcal M_G$.
\end{theorem}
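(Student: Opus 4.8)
The plan is to follow the classical disintegration route: realize the action topologically, disintegrate an arbitrary invariant measure over the $\sigma$-algebra of invariant Borel sets, show that the pieces are ergodic, and then glue the local disintegrations into a single Borel map using a measurable selection together with the essential uniqueness of the disintegration. Since $G$ is locally compact second countable, I would first replace the abstract standard Borel $G$-space $X$ by a $G$-equivariantly isomorphic $G$-invariant Borel subset of a compact metrizable space on which $G$ acts by homeomorphisms; this standard ``topologization'' makes $P(X)$ a Borel subset of the compact metrizable space of probability measures on the ambient space, with $\mathcal{M}_G$ a closed convex subset. One then has to know that $\mathcal{M}^e_G$ is itself a standard Borel space, i.e.\ that ergodicity is a Borel condition on $\nu\in\mathcal{M}_G$; I postpone this point, as it is the crux.

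\emph{Disintegrating a single invariant measure.} Fix $\mu\in\mathcal{M}_G$ and let $\mathcal I$ be the $\sigma$-algebra of $G$-invariant Borel subsets of $X$. Because $L^2(X,\mathcal I,\mu)$ is separable, $\mathcal I$ is, modulo $\mu$-null sets, countably generated, so Rokhlin's disintegration theorem yields an $\mathcal I$-measurable assignment $x\mapsto\mu_x\in P(X)$, defined $\mu$-a.e., with $\mu=\int\mu_x\,d\mu(x)$ and each $\mu_x$ concentrated on the $\mathcal I$-atom of $x$. Three checks remain: (i) every set of $\mathcal I$ has $\mu_x$-measure $0$ or $1$ (immediate from concentration on the atom), so $\mu_x$ is ergodic for $\mu$-a.e.\ $x$; (ii) $\mu_x$ is $G$-invariant $\mu$-a.e.\ --- for each fixed $g$, invariance of $\mu$ gives $g_*\mu_x=\mu_x$ off a $\mu$-null set, and a Fubini argument against a probability measure on $G$ equivalent to Haar (using second countability and the joint Borel measurability of the action) upgrades this to a single $\mu$-conull set on which $g_*\mu_x=\mu_x$ for \emph{all} $g$, since $\{g:g_*\mu_x=\mu_x\}$ is a subgroup of full Haar measure; (iii) $x\mapsto\mu_x$ is $G$-invariant modulo $\mu$, being $\mathcal I$-measurable. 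In particular $\mathcal{M}_G\neq\emptyset$ forces $\mathcal{M}^e_G\neq\emptyset$.

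\emph{A single Borel $\pi$ and the uniqueness clause.} The disintegration along $\mathcal I$ is essentially unique, so any two maps satisfying the conclusions above agree $\nu$-a.e.\ for every $\nu\in\mathcal{M}_G$; it therefore suffices to produce \emph{one} Borel, $G$-invariant map $\pi\colon X\to\mathcal{M}^e_G$ together with a Borel $G$-invariant set $X_0$, $\nu$-conull for every $\nu\in\mathcal{M}_G$, on which $\pi$ coincides with the local disintegrations, extending $\pi$ arbitrarily (e.g.\ orbit by orbit, or by a Borel selection) outside $X_0$. Granting such a $\pi$: clause (1) is its $G$-invariance; for clause (2), if $\nu\in\mathcal{M}^e_G$ then by essential uniqueness $\pi\equiv\nu$ on a $\nu$-conull set, so $\nu(X_\nu)=1$, and any invariant measure on $X_\nu=\pi^{-1}(\nu)$ disintegrates over $\pi$ into measures all equal to $\nu$, hence equals $\nu$; clause (3) follows by pushing $\mu$ forward along $\pi$ and disintegrating, $\mu=\int_{\mathcal{M}^e_G}\mu_\nu\,d(\pi_*\mu)(\nu)$ with each $\mu_\nu$ invariant and concentrated on $X_\nu$, hence $\mu_\nu=\nu$ and $\mu=\int\pi(\xi)\,d\mu(\xi)$; surjectivity onto $\mathcal{M}^e_G$ holds because each $\nu$ is attained on its own conull set; and the final ``moreover'' statement is exactly the essential uniqueness invoked at the start.

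\emph{Main obstacle.} The substantive difficulties are (a) verifying that $\mathcal{M}^e_G$ is a standard Borel space, i.e.\ that ergodicity is a Borel condition, and (b) carrying out the measurable selection of a single $\pi$ valid simultaneously for all invariant measures. For countable discrete $G$ one can shortcut (a) by identifying $\mathcal{M}^e_G$ with the extreme points of the Choquet simplex $\mathcal{M}_G$, but, as the excerpt already notes via Kolmogorov's example, this identification breaks down for general locally compact $G$; so one must instead extract from the action a countable family of invariant Borel sets whose joint triviality already detects ergodicity, and show that triviality of this family is Borel-parametrized over $\mathcal{M}_G$. Producing this countable ``sufficient'' family and threading it through the selection is the technical heart of the argument, and it is precisely here that both the local compactness and the second countability of $G$ are used essentially.
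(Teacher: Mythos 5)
You should first note that the paper does not prove this statement at all: it is Varadarajan's ergodic decomposition theorem, imported verbatim with citations to \cite{varadarajan:groups_of_automorphisms63}, \cite{farrell:representations_invar_measures62} and \cite{kechris_m:topics_in_orbit_equivalence04}, and then used as a black box in Section~\ref{sec:theorem}. So there is no in-paper argument to compare yours against; your proposal has to stand on its own as a proof of the classical theorem.

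Judged on that basis, what you have is the standard disintegration outline, and it is honest about where the difficulties lie, but the two items you defer are not peripheral technicalities --- they are the content of the theorem. First, your step (i) is not ``immediate from concentration on the atom'': Rokhlin disintegration over a countably generated sub-$\sigma$-algebra $\mathcal I_0\subseteq\mathcal I$ gives $\mu_x(A)\in\{0,1\}$ for each \emph{fixed} invariant $A$ off a null set depending on $A$, whereas ergodicity of $\mu_x$ requires the $0$--$1$ law for \emph{all} invariant Borel sets simultaneously for a.e.\ $x$; since $\mathcal I$ is not countably generated, the quantifier exchange needs the ``countable sufficient family'' you only gesture at in the final paragraph. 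Second, and more seriously, the map $\pi$ in the theorem must be defined on all of $X$ and must not depend on the invariant measure $\mu$; your construction produces, for each $\mu$, an $\mathcal I$-measurable family $x\mapsto\mu_x$ defined only $\mu$-a.e., and ``gluing'' these over the (typically mutually singular, possibly uncountably many) measures in $\mathcal M_G$ into one Borel map agreeing with every local disintegration on a set conull for every $\nu\in\mathcal M_G$ is precisely what Farrell and Varadarajan construct directly (via a countable algebra of sets/functions and a uniformly defined limit of conditional expectations), and what you assume as ``Granting such a $\pi$''. The same goes for the Borelness of $\mathcal M^e_G$. So the proposal is a correct map of the terrain, including a correct use of the Steinhaus-type argument for $G$-invariance of the fibre measures, but as written it is a plan with its two hardest steps left open rather than a proof.
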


Throughout the paper we will use the above theorem in two cases: when a group $G$ is countable with the discrete topology, and when $G$ is a profinite group.

\section{Ergodic decomposition for groups acting on rooted trees}
\label{sec:theorem}

Let $G$ be a group acting on a rooted tree $T$ by automorphisms and, hence, on its boundary $\partial T$ by homeomorphisms. Throughout this section we will write $\X=\partial T$ and $\Y=\partial T_{G}$ where $T_{G}$ is the orbit tree associated with the action of $G$ on $T$.

\begin{definition}
A \emph{leaf} of a rooted tree is a vertex of degree one which is different from the root of the tree.
\end{definition}

All rooted trees that we consider in this paper are rooted trees with no leaves (i.e., each vertex lies on some path(s) in the boundary of the tree).

\begin{definition}
Let $T$ be an infinite rooted tree with no leaves. A subtree of $T$ with no leaves is called \emph{rooted} if it contains the root of $T$.
\end{definition}

\begin{definition}
Let $G$ be a group acting on a rooted tree $T$ with no leaves. A nonempty rooted subtree $T'$ of $T$ with no leaves is called \emph{minimal} (denoted $T'\prec T$) if it is a minimal (with respect to inclusion) invariant subtree with no leaves.
\end{definition}

\begin{proposition}
For a group $G$ acting on a rooted tree $T$, the boundary $\partial T$ can be decomposed as
\begin{equation}
\label{eq:boundary}
\partial T=\bigsqcup_{T'\prec T}\partial T'.
\end{equation}
Moreover, there is a bijection between the set of minimal subtrees of $T$ and the boundary $\partial T_{G}$ of the orbit tree $T_{G}$ associated with the action of $G$ on $T$.
\end{proposition}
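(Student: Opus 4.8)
The plan is to build an explicit bijection $\eta\mapsto T_\eta$ between $\partial T_G$ and the set of minimal subtrees of $T$, and then to read the decomposition~\eqref{eq:boundary} off it. Given a point $\eta=(\mathcal O_0,\mathcal O_1,\mathcal O_2,\ldots)\in\partial T_{G}$, where $\mathcal O_n$ is an orbit of $G$ on the $n$-th level of $T$ and $\mathcal O_n$ is adjacent in $T_{G}$ to $\mathcal O_{n+1}$, I would let $T_\eta$ be the subgraph of $T$ spanned by $\bigcup_{n\geq0}\mathcal O_n$.

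First I would check that $T_\eta$ is a rooted invariant subtree of $T$ with no leaves. For connectedness, note that the parent of every vertex of $\mathcal O_{n+1}$ lies in $\mathcal O_n$: if $v\in\mathcal O_n$ is the parent of some $w\in\mathcal O_{n+1}$ and $w'\in\mathcal O_{n+1}$ is arbitrary, pick $g\in G$ with $g(w)=w'$, and then $g(v)\in\mathcal O_n$ is the parent of $w'$. The same transitivity trick, applied now to a vertex of $\mathcal O_n$ that has a child in $\mathcal O_{n+1}$, shows that every vertex of $T_\eta$ has a child in $T_\eta$ and hence lies on an infinite path in $T_\eta$; so $T_\eta$ has no leaves. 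Being a connected subgraph of the tree $T$ it is a subtree, it contains the root $\mathcal O_0$, and each $\mathcal O_n$ is $G$-invariant, so $T_\eta$ is a rooted invariant subtree with no leaves.

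Next I would prove that $T_\eta$ is minimal and, conversely, that every minimal subtree is of this form. If $T''\subseteq T_\eta$ is a nonempty rooted invariant subtree with no leaves, then $T''$ meets every level (its root lies on an infinite path inside $T''$); choosing $v_n\in V(T'')\cap[T]_n$ we have $v_n\in V(T_\eta)\cap[T]_n=\mathcal O_n$, so by invariance $\mathcal O_n=G\cdot v_n\subseteq V(T'')$, whence $V(T'')\supseteq\bigcup_n\mathcal O_n=V(T_\eta)$ and $T''=T_\eta$. Conversely, let $T'\prec T$; since $T'$ has no leaves, $\partial T'\neq\emptyset$ (any finite path in $T'$ starting at the root extends indefinitely), so choose $\xi\in\partial T'$ and put $\eta=\psi(\xi)$, so that the $n$-th vertex $\mathcal O_n$ of $\eta$ is the orbit of $[\xi]_n$. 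Invariance of $T'$ gives $\mathcal O_n\subseteq V(T')$ for all $n$, hence $T_\eta\subseteq T'$, and since $T_\eta$ is a nonempty rooted invariant subtree with no leaves, minimality of $T'$ forces $T'=T_\eta$. The map $\eta\mapsto T_\eta$ is injective because $V(T_\eta)\cap[T]_n=\mathcal O_n$ recovers the $n$-th vertex of $\eta$, so it is the asserted bijection.

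Finally I would derive~\eqref{eq:boundary}. For an arbitrary $\xi\in\partial T$, set $\eta=\psi(\xi)$; then $[\xi]_n\in\mathcal O_n\subseteq V(T_\eta)$ for all $n$, so $\xi\in\partial T_\eta$, which shows $\partial T=\bigcup_{T'\prec T}\partial T'$. If moreover $\xi\in\partial T'$ with $T'=T_{\eta'}$ minimal, then $[\xi]_n\in V(T')\cap[T]_n=\mathcal O'_n$, so $\psi(\xi)=\eta'$; thus the minimal subtree through $\xi$ is uniquely determined by $\xi$ and the union is disjoint. The only steps that require genuine care are the ``no leaves'' verification for $T_\eta$ through the transitivity trick and the minimality argument above; the rest is bookkeeping.
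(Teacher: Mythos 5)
Your proof is correct and follows essentially the same route as the paper: both construct the minimal subtrees as unions of the $G$-orbits along a boundary point of the orbit tree and then read off the bijection and the disjoint decomposition. You simply supply more detail (the connectedness, the no-leaves check via transitivity, and the explicit minimality argument) where the paper asserts these facts ``by construction.''
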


\begin{proof}
First we show that if $T'$ and $T''$ are two different minimal subtrees, then $\partial T'\cap\partial T''=\emptyset$. Indeed, since $T'$ and $T''$ are minimal subtrees of $T$, on each level of $T$ their sets of vertices must either coincide or be disjoint: if $\xi\in\partial T'\cap\partial T''$, then $[\xi]_n$ is a common vertex of the $n$-th levels of $T'$ and $T''$ and therefore, since there is such a vertex for each $n$, minimality ensures that $T'=T''$.

Now, for each $\xi\in\partial T$ we will build a minimal subtree $T_{\xi}$ of $T$ with $\xi\in\partial T_{\xi}$. Define $V(T_{\xi})$ to be the preimage under $\psi$ of the set of vertices $\{[\psi(\xi)]_n\colon n\geq0\}$ of the orbit tree $T_{G}$. In other words, $T_{\xi}$ is a union of orbits of $[\xi]_n$ under the action of $G$. Then by construction $T_{\xi}$ is a minimal subtree of $T$ containing $\xi$. Moreover, if $T'$ is a minimal subtree of $T$, its boundary must contain some point $\eta\in\partial T$ that is also contained in $T_{\eta}$, yielding $T'=T_{\eta}$. Finally, the fact that $T_{\xi}=T_{\xi'}$ if and only if $\psi(\xi)=\psi(\xi')$ proves that the map from the boundary $\partial T_{G}$ to the set of minimal subtrees of $T$ sending $\eta\in\partial T_{G}$ to $T_{\eta}=\psi^{-1}\left(\{[\eta]_n\colon n\geq0\}\right)$ is a bijection.
\end{proof}

The last proof motivates the following notation: for $\xi\in\partial T$, and $\eta\in\partial T_{G}$ we associate minimal subtrees $T_{\xi}$ and $T_{\eta}$ of $T$ with $\xi\in\partial T_{\xi}$, $\psi(\partial T_{\eta})=\eta$ and $T_{\xi}=T_{\psi(\xi)}$.

Observe, that the decomposition~\eqref{eq:boundary} can now be rewritten as
\begin{equation}
\label{eq:boundary}
\partial T=\bigsqcup_{\eta\in\partial T_G}\partial T_\eta
\end{equation}
and for each $\eta\in\partial T_G$ the boundary $\partial T_{\eta}$ is a closed subset of $\partial T$.
\begin{theorem}
\label{thm:EI_homeo_bndry}
Let $G$ be a countable discrete or profinite group acting faithfully by automorphisms on a spherically homogeneous rooted tree $T$ and by homeomorphisms on its boundary $\X=\partial T$, and let $\Y=\partial T_G$ be the boundary of the corresponding orbit tree $T_G$.
\begin{itemize}
\item[(a)] The map $\beta\colon \Y\to\mathcal{M}^e_G$ sending a point $\eta\in\Y$ to the uniform probability measure $\mu_{\eta}$ with support $\partial T_{\eta}$ in the space $\mathcal{M}^e_G$ of invariant ergodic measures on $\X$ is a homeomorphism.
\item[(b)] The map $\pi=\beta\circ\psi\colon \X\to\mathcal{M}^e_G$ sending a point $\xi\in\X$ to the uniform probability measure $\mu_{\psi(\xi)}$ with support $\partial T_{\xi}$ satisfies conditions 1)-3) of Theorem~\ref{thm:erg_decomp_general} and thus defines the ergodic decomposition of the action of $G$ on $\X$.
\end{itemize}
\end{theorem}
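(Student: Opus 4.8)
The strategy is to verify the two parts more or less directly, using the Proposition above (the bijection between $\partial T_G$ and minimal subtrees) together with Varadarajan's theorem (Theorem~\ref{thm:erg_decomp_general}). For part (a), I would first check that each $\mu_\eta$ — the uniform probability measure on $\partial T_\eta$, pushed forward to $\X$ — really is an invariant ergodic measure. Invariance is clear because $G$ permutes the vertices of each level of $T_\eta$ transitively within $G$-orbits and preserves the level structure, so it preserves the cylinder masses $1/|[T_\eta]_n|$; ergodicity follows from the fact that the action of $G$ on $\partial T_\eta$ is level-transitive (the levels of $T_\eta$ are single $G$-orbits by construction), and level-transitivity is equivalent to ergodicity (resp. unique ergodicity) of the uniform measure by \cite[Proposition~6.5]{gns00:automata}, cited in the Introduction. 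Thus $\beta$ is well-defined as a map $\Y\to\mathcal M^e_G$. Injectivity of $\beta$ is immediate since distinct $\eta$ give distinct minimal subtrees $\partial T_\eta$ (by the Proposition) with disjoint boundaries, hence distinct (mutually singular) measures.

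The substantive point in (a) is surjectivity: every ergodic invariant probability measure on $\X$ is one of the $\mu_\eta$. Here I would argue that if $\nu\in\mathcal M^e_G$, then for each level $n$ the measure $\nu$ is supported, up to null sets, on a single $G$-orbit of vertices: indeed the union of cylinders over any $G$-orbit on level $n$ is a $G$-invariant clopen set, these sets partition $\X$, so by ergodicity exactly one of them has full measure; call it $\mathcal O_n$. The sets $\mathcal O_n$ are nested (the image of $\mathcal O_{n+1}$'s vertices on level $n$ must again be a full-measure orbit, hence $\mathcal O_n$), so they determine a path in the orbit tree, i.e. a point $\eta\in\Y$, and $\nu$ is supported on $\partial T_\eta$. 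Finally, within each orbit $\mathcal O_n$ all vertices get equal $\nu$-mass: any two vertices $v,w$ in the same $G$-orbit satisfy $\nu(C_v)=\nu(C_w)$ because $g$ with $gv=w$ maps $C_v$ to $C_w$ and $\nu$ is $G$-invariant. So $\nu$ agrees with $\mu_\eta$ on all cylinders, hence $\nu=\mu_\eta=\beta(\eta)$, proving surjectivity. For the homeomorphism claim I would note that $\Y$ is compact (it is the boundary of a rooted tree) and $\mathcal M^e_G\subseteq P(\X)$ is Hausdorff in the weak topology, so it suffices to show $\beta$ is continuous: if $\eta_k\to\eta$ in $\Y$, then $\eta_k$ and $\eta$ agree on longer and longer initial segments of the orbit tree, forcing $\mu_{\eta_k}$ and $\mu_\eta$ to agree on cylinders $C_v$ for $v$ on all sufficiently low levels, which gives weak convergence $\mu_{\eta_k}\to\mu_\eta$ since cylinder sets generate the topology of $\X$.

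For part (b), the map $\pi=\beta\circ\psi$ is Borel (indeed continuous) as a composition of the continuous surjection $\psi\colon\X\to\Y$ with the homeomorphism $\beta$. Condition 1) holds because $\psi$ is constant on $G$-orbits in $V(T)$, hence on $G$-orbits in $\X$. Condition 2): for $\nu=\mu_\eta\in\mathcal M^e_G$ the fiber $X_\nu=\pi^{-1}(\nu)=\psi^{-1}(\eta)=\partial T_\eta$, which has $\mu_\eta$-measure $1$ by construction, and the action $G\curvearrowright\partial T_\eta$ is level-transitive, hence uniquely ergodic (again by \cite[Proposition~6.5]{gns00:automata}), with $\mu_\eta$ the unique invariant measure. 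Condition 3): given $\mu\in\mathcal M_G$, one must show $\mu=\int_{\X}\pi(\xi)\,d\mu(\xi)$; by the decomposition $\X=\bigsqcup_{\eta\in\Y}\partial T_\eta$ and the fact that each $\partial T_\eta$ is $G$-invariant, the measure $\mu$ disintegrates over $\Y$ via $\psi_*\mu$, each conditional measure is $G$-invariant on $\partial T_\eta$, hence equals $\mu_\eta$ by unique ergodicity, and assembling gives $\mu=\int_\Y \mu_\eta\,d(\psi_*\mu)(\eta)=\int_\X\pi(\xi)\,d\mu(\xi)$. The main obstacle I anticipate is making this disintegration step fully rigorous — one should cite the existence of the disintegration of $\mu$ with respect to the Borel map $\psi$ into standard Borel spaces, and then argue that $\mu$-almost every conditional measure is supported on a single fiber $\partial T_\eta$ and is $G$-invariant there, so unique ergodicity pins it down. (Alternatively, one may simply quote the uniqueness clause of Theorem~\ref{thm:erg_decomp_general}: once 1) and 2) are checked, $\pi$ must agree $\mu$-a.e.\ with the canonical Varadarajan map for every $\mu\in\mathcal M_G$, which forces 3) as well.) Everything else is routine verification on cylinder sets.
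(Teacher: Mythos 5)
Your proposal is essentially correct and, for part (a) and conditions 1)--2) of part (b), follows the same route as the paper: well-definedness via level-transitivity of $G$ on $T_\eta$ and \cite[Proposition~6.5]{gns00:automata}, and surjectivity by observing that an ergodic measure must concentrate, level by level, on a single orbit (the paper phrases this via the pushforward measures $[\mu]_n$ on the levels, you phrase it via the invariant clopen partition into orbit-cylinders --- these are the same argument). Your treatment of the homeomorphism claim is actually a slight simplification: the paper verifies both implications of the equivalence ``$\eta_n\to\eta$ iff $\mu_{\eta_n}\to\mu_\eta$ weakly'' by hand, whereas you check continuity of $\beta$ only and invoke compactness of $\Y$ plus Hausdorffness of $P(\X)$; this is legitimate since $T_G$ is locally finite, so $\Y$ is compact. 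The genuine divergence is condition 3). The paper avoids disintegration entirely: since measures on $\partial T$ are determined by their values on cylinders, it suffices to verify $\mu(C_v)=\int_\X\bigl(\pi(\xi)\bigr)(C_v)\,d\mu(\xi)$ for each vertex $v$, and this is a two-line computation --- the integrand equals $|\mathcal O_v|^{-1}$ on $\bigcup_{w\in\mathcal O_v}C_w$ and $0$ elsewhere, so the integral equals $|\mathcal O_v|^{-1}\sum_{w\in\mathcal O_v}\mu(C_w)=\mu(C_v)$ by invariance. I would recommend this over the disintegration route, which (as you note) requires additional justification. Your parenthetical fallback via the uniqueness clause of Theorem~\ref{thm:erg_decomp_general} can be made to work, but note that the clause as stated applies to maps satisfying all of 1)--3); you would need the short extra argument that condition 2) alone forces $\pi$ to agree with the canonical Varadarajan map $\nu$-a.e.\ for every ergodic $\nu$, and hence $\mu$-a.e.\ for every $\mu\in\mathcal M_G$ by integrating. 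Finally, you omit the profinite case entirely: the theorem is stated for countable discrete \emph{or} profinite $G$, and the paper disposes of the latter by passing to a countable dense subgroup of $G$ (which has the same orbits on vertices and the same invariant measures); a sentence to this effect is needed to cover the full statement.
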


\begin{proof}
We will first prove the theorem for the case when $G$ is a countable discrete group.

We start from the proof of part (a). The action of $G$ on $T_\eta$ is level transitive since each level $n$ of $T_\eta$ corresponds to exactly one orbit of $G$ on the same level $n$ of $T$. Hence, by~\cite[Proposition~6.5]{gns00:automata}, the action of $G$ on $\partial T_\eta$ (and thus on $\partial T$) is ergodic with respect to $\mu_\eta$, so $\mu_\eta\in\mathcal{M}^e_G$ and $\beta$ is well-defined.

To show that $\beta$ is surjective, assume $\mu\in\mathcal{M}^e_G$. Let $[\mu]_n$ be the measure induced by $\mu$ on the $n$-th level of $T$, i.e., for $A\subset [T]_n$
\[[\mu]_n(A)=\sum_{v\in A}\mu(C_v).\]
In other words, $[\mu]_n$ is the projection of $\mu$ induced by the natural projection $p_n\colon\partial T\to [T]_n$. As $\mu$ is ergodic invariant probability measure, so is $[\mu]_n$. Therefore, $[\mu]_n$ is supported on exactly one orbit $\mathcal O_{n,\mu}$ of $G$ on $[T]_n$ an its value on each vertex of $\mathcal O_{n,\mu}$ is equal to $|\mathcal O_{n,\mu}|^{-1}$. The sequence of orbits $\{\mathcal O_{n,\mu}\}_{n\geq0}$ defines a unique point $\eta\in\Y$ since $\mathcal O_{n,\mu}$ is always adjacent to $\mathcal O_{n+1,\mu}$ in $T_{G}$. By construction we get that $\mu$ coincides with $\mu_\eta$ on each cylindrical set in $\partial T$. Thus, $\mu=\mu_\eta$ and the map $\beta$ is onto.

Finally, we will prove that the topological structure on $\Y$ is isomorphic to the one on $\mathcal{M}^e_G$. Since both $\Y$ and $\mathcal{M}^e_G$ are metrizable, their topologies are completely determined by the convergent sequences.
Thus, it is enough to show that the following conditions are equivalent: (i) $\eta_n\to\eta,\ n\to\infty$ in $\Y$ and (ii) $\mu_{\eta_n}\to\mu_{\eta}$ weakly as $n\to\infty$, i.e. formula
\begin{equation}
\label{eqn:conv}
\int \mathds 1_{C_v}\,d\mu_{\eta_n}\to\int \mathds 1_{C_v}\,d\mu_{\eta},\quad n\to\infty.
\end{equation}
holds for all $v\in V(T)$, where $\mathds 1_{C_v}$ denotes the characteristic function of a cylindrical set $C_v$.

Suppose that $v$ is on the $l$-th level of $T$ and $\eta_n\to\eta,\ n\to\infty$. Then there is $N>0$ such that for all $n\geq N$ we have $[\eta_n]_l=[\eta]_l$. But in this case for $n\geq N$
\begin{multline*}\int \mathds 1_{C_v}\,d\mu_{\eta_n}=
\left\{\begin{array}{ll}
0,&\text{if $v\notin[\eta_n]_l$}\\
|\psi^{-1}([\eta_n]_l)|^{-1},&\text{if $v\in[\eta_n]_l$}
\end{array}\right.\\
=
\left\{\begin{array}{ll}
0,&\text{if $v\notin\psi^{-1}([\eta]_l)$}\\
|\psi^{-1}([\eta]_l)|^{-1},&\text{if $v\in\psi^{-1}([\eta]_l)$}
\end{array}\right.=\int \mathds 1_{C_v}\,d\mu_{\eta}.
\end{multline*}
Therefore convergence~\eqref{eqn:conv} takes place.

Conversely, assume that $\mu_{\eta_n}\to\mu_{\eta},\ n\to\infty$ and suppose that $\eta_n\not\to\eta,\ n\to\infty$. Then there is some level $l$ and a subsequence $\{\eta_{n_i}\}_{i\geq 1}$ such that $[\eta_{n_i}]_l\neq[\eta]_l$. Consider the set
\[A=\bigsqcup_{v\in\psi^{-1}([\eta]_l)}C_v\]
corresponding to all points in $\X$ that pass through vertices in $\psi^{-1}([\eta]_l)$. The characteristic function $\mathds 1_{A}\colon\X\to\R$ satisfies
\[\int \mathds 1_{A}\,d\mu_{\eta_n}=0\neq 1=\int \mathds 1_{A}\,d\mu_{\eta}\]
contradicting our assumptions. This finishes the proof of part (a).

To prove part (b) of the theorem, we first note that $\pi$ is a Borel surjection since it is a composition of a continuous projection $\psi$ and a homeomorphism $\beta$. We will now check conditions 1)-3) one by one. Condition 1) is trivially satisfied by definition of $\pi$.

Condition 2) is satisfied by Proposition 6.5 in~\cite{gns00:automata}, as for each $e\in\mathcal{M}^e_G$ the set $X_e=\pi^{-1}(e)$ simply coincides with $\partial T_\eta$ for $\eta\in\Y$ such that $e=\mu_\eta$.

Finally, condition 3) is proved as follows. Consider $\mu\in \mathcal M_G$ and $A=C_v$, $v\in V(T)$. Let $\mathcal O_v$ denote the orbit of $v$ under $G$. For each $w\in\mathcal O_v$ \begin{equation}
\label{eqn:invar}
\mu(C_w)=\mu(C_v)=\mu(A).
\end{equation}
Also, for $\xi\in\X$ we have
\[
\mu_{\psi(\xi)}(A)=\left\{\begin{array}{ll}
0,&\text{if $\xi$ does not pass through a vertex in $\mathcal O_v$},\\
\frac1{|\mathcal O_v|},&\text{otherwise}.
\end{array}
\right.
\]
Therefore, the right-hand side of the equality in condition 3) applied to the set $A$ can be decomposed as:
\begin{multline*}
\int_{\X}\bigl(\pi(\xi)\bigr)(A)\,d\mu(\xi)=\sum_{w\in\mathcal O_v}\int_{T_w}\mu_{\psi(\xi)}(A)\,d\mu(\xi)=\sum_{w\in\mathcal O_v}\int_{T_w}\frac1{|\mathcal O_v|}\,d\mu(\xi)\\
=\frac1{|\mathcal O_v|}\sum_{w\in\mathcal O_v}\int_{T_w}\,d\mu(\xi)=\frac1{|\mathcal O_v|}\sum_{w\in\mathcal O_v}\mu(T_w)=\frac1{|\mathcal O_v|}\sum_{w\in\mathcal O_v}\mu(A)=\mu(A),
\end{multline*}
where we applied equality~\eqref{eqn:invar} in the next to the last transition.

Finally, we note that the profinite case follows from the previous case because for a countably based profinite group $G$ one can always find a countable discrete dense subgroup in $G$. Note, that $G$ must be countably based since it acts faithfully on $T$ and, hence it is a subgroup of the countably based group $\Aut(T)$.
\end{proof}

\section{Examples of Ergodic Decompositions}
\label{sec:examples}

\subsection{Groups generated by automata}

Most of the interesting examples of groups acting on rooted trees come from the class of groups generated by automata (not to be confused with automatic groups in the sense of~\cite{epstein_chpt:word_processing_in_groups92}). We start by recalling some basic definitions that we shall need later.

\begin{definition}
A \emph{Mealy automaton} (or simply \emph{automaton}) is a tuple
$(Q,\Sigma,\pi,\lambda)$, where $Q$ is a set (the set of states), $\Sigma$ is a
finite alphabet, $\pi\colon Q\times \Sigma\to Q$ is the transition function
and $\lambda\colon Q\times \Sigma\to \Sigma$ is the output function. If the set
of states $Q$ is finite the automaton is called \emph{finite}. If
for every state $q\in Q$ the output function $\lambda(q,x)$ induces
a permutation of $\Sigma$, the automaton $\A$ is called invertible.
Selecting a state $q\in Q$ produces an \emph{initial automaton}
$\A_q$.
\end{definition}

Automata are often represented by their associated \emph{Moore diagrams}. The
Moore diagram of an automaton $\A=(Q,\Sigma,\pi,\lambda)$ is the directed
labelled graph in which the vertices are the states from $Q$ and the labelled edges
have the form $q\stackrel{x|\lambda(q,x)}{\longrightarrow}\pi(q,x)$ for
$q\in Q$ and $x\in \Sigma$. If the automaton is invertible, then it is
common to label vertices of the Moore diagram by the permutation
$\lambda(q,\cdot)\in\Sym(\Sigma)$ and leave just first components from the labels
of the edges, see for example Figure~\ref{fig:lamplighter_aut}. An example of Moore diagram (for Sushchansky automaton) is shown in
Figure~\ref{fig:lamplighter_aut}.

Any initial automaton induces an endomorphism of the rooted tree $\Sigma^*$ (in this situation we consider $\Sigma^*$ specifically as a tree and not as free monoid). Given a word
$v=x_1x_2x_3\ldots x_n\in \Sigma^*$ it scans its first letter $x_1$ and
outputs $\lambda(x_1)$. The rest of the word is handled in a similar
fashion by the initial automaton $\A_{\pi(x_1)}$. Formally speaking,
the functions $\pi$ and $\lambda$ can be extended to $\pi\colon
Q\times \Sigma^*\to Q$ and $\lambda\colon  Q\times \Sigma^*\to \Sigma^*$ via
\[\begin{array}{l}
\pi(q,x_1x_2\ldots x_n)=\pi(\pi(q,x_1),x_2x_3\ldots x_n),\\
\lambda(q,x_1x_2\ldots x_n)=\lambda(q,x_1)\lambda(\pi(q,x_1),x_2x_3\ldots x_n).\\
\end{array}
\]

By construction any initial automaton acts on the rooted tree $\Sigma^*$ as a
endomorphism. In case of invertible automaton it acts as an
automorphism of this rooted tree. Below we will sometimes identify a state $q$ of an automaton $\A$ with the corresponding initial automaton $\A_q$. Thus each state of an automaton defines an endomorphism (or automorphism in the invertible case) of the tree $\Sigma^*$.

\begin{definition}
Let $\A$ be an (invertible) automaton over an alphabet $\Sigma$. The semigroup $\mathds S(\A)$ (group $\mathds G(\A)$) generated by all states of $\A$ viewed as endomorphisms (automorphisms) of the rooted tree $\Sigma^*$ under the operation of composition is called an \emph{automaton semigroup} (\emph{automaton group}).
\end{definition}

Another popular name for automaton groups (resr. semigroups) is
self-similar groups (resr. semigroups)
(see~\cite{nekrash:self-similar}).

We will also consider subgroups of automaton groups. These groups are generated by one or more initial invertible automata.

Conversely, any endomorphism of $\Sigma^*$ can be encoded by the action
of a suitable initial automaton. In order to show this we need the notion of a
\emph{section} of a endomorphism at a vertex of the tree. Let $g$ be
a endomorphism of the tree $\Sigma^*$ and $x\in \Sigma$. Then for any $v\in
\Sigma^*$ we have
\[g(xv)=g(x)v'\]
for some $v'\in \Sigma^*$. Then the map $g|_x\colon \Sigma^*\to \Sigma^*$ given by
\[g|_x(v)=v'\]
defines a endomorphism of $\Sigma^*$ and is called the \emph{section} of
$g$ at vertex $x$. Furthermore,  for any nontrivial word $x_1x_2\ldots x_n\in \Sigma^*$
we define \[g|_{x_1x_2\ldots x_n}=(\ldots((g|_{x_1})|_{x_2})|_{x_3}\ldots)|_{x_n}.\]
Finally, for empty word $\emptyset$ corresponding to the root of the tree we define $g|_{\emptyset}=g$.

Given a endomorphism $g$ of $\Sigma^*$ we construct an initial automaton
$\A(g)$ whose action on $\Sigma^*$ coincides with that of $g$ as follows.
The set of states of $\A(g)$ is the set $\{g|_v\colon  v\in \Sigma^*\}$
of different sections of $g$ at the vertices of the tree. The
transition and output functions are defined by
\[\begin{array}{l}
\pi(g|_v,x)=g|_{vx},\\
\lambda(g|_v,x)=g|_v(x).
\end{array}\]

Thus, the semigroup of all endomorphisms of the tree $\Sigma^*$ is isomorphic to the semigroup generated by all initial automata over $\Sigma$. Respectively, the group of all automorphisms of the tree $\Sigma^*$ is isomorphic to the group generated by all initial invertible automata over $\Sigma$.

For any automaton group $G$ there is a natural embedding of $G$ into the permutational wreath product of $G$ with $\Sym(\Sigma)$
\[G\hookrightarrow G \wr_{\Sigma} \Sym(\Sigma)\]
defined by
\[G\ni g\mapsto (g_1,g_2,\ldots,g_d)\lambda(g)\in G\wr_{\Sigma} \Sym(\Sigma),\]
where $g_1,g_2,\ldots,g_d$ are the sections of $g$ at the vertices
of the first level, and $\lambda(g)$ is the permutation of $\Sigma$ induced by the action of $g$ on the first level of the tree.

The above embedding is convenient in computations involving the
sections of automorphisms, as well as for defining automaton groups.
When $G$ is a finitely generated automaton group, the restriction of the above embedding to a (finite) generating set of $G$ is sometimes called the \emph{wreath recursion} defining the group. For example, the wreath recursion of the Lamplighter group generated by the automaton shown in Figure~\ref{fig:lamplighter_aut} is given in~\eqref{eq:lamplighter_wr}.

\subsection{Some simple cases}
\label{ssec:triv}

We start with two easy examples.

\begin{proposition}
Let $G$ be a finite group acting on a rooted spherically homogeneous tree $T$ with infinite boundary. The space $\mathcal{M}^e_G$ of ergodic invariant probability measures on $\partial T$ is homeomorphic to the Cantor set.
\end{proposition}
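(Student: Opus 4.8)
The plan is to reduce the statement to Theorem~\ref{thm:EI_homeo_bndry} and then invoke the classical topological characterization of the Cantor set (Brouwer): a topological space is homeomorphic to the Cantor set if and only if it is nonempty, compact, metrizable, totally disconnected, and perfect. Replacing $G$ by its image in $\Aut(T)$ changes neither the orbit tree $T_G$ nor the collections $\mathcal M_G$ and $\mathcal{M}^e_G$ (the action factors through this image), so we may assume the action of $G$ on $T$ is faithful. By part~(a) of Theorem~\ref{thm:EI_homeo_bndry}, $\mathcal{M}^e_G$ is then homeomorphic to $\partial T_G$. Since $\partial T$ is infinite, $T$, and hence $T_G$, is an infinite rooted tree with no leaves, and its levels are finite because $|[T_G]_n|\le|[T]_n|<\infty$; therefore $\partial T_G$ is nonempty, compact, metrizable, and totally disconnected. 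It remains only to show that $\partial T_G$ has no isolated points.

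Assume for contradiction that $\eta\in\partial T_G$ is isolated. Then $\{\eta\}$ is open, hence coincides with a basic cylinder $C_{\mathcal O}$, and necessarily $\mathcal O=[\eta]_N$ for some $N\ge 0$; thus for every $n\ge N$ the vertex $\mathcal O_n:=[\eta]_n$ of $T_G$ has a unique child in $T_G$. Let $d_n$ be the number of children of a vertex of the $n$-th level of $T$, and view each $\mathcal O_n$ also as the corresponding $G$-orbit inside $[T]_n$. If $\mathcal O_{n+1}$ is the only child of $\mathcal O_n$ in $T_G$, then the set of all children in $T$ of all vertices of $\mathcal O_n$ is contained in the single orbit $\mathcal O_{n+1}$, and conversely every vertex of $\mathcal O_{n+1}$ is such a child; since distinct vertices of $\mathcal O_n$ have disjoint sets of $d_n$ children, this yields
\[
|\mathcal O_{n+1}|=|\mathcal O_n|\cdot d_n,\qquad n\ge N .
\]

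Iterating, $|\mathcal O_n|=|\mathcal O_N|\prod_{i=N}^{n-1}d_i$ for all $n\ge N$. As $\mathcal O_n$ is a single $G$-orbit and $G$ is finite, $\prod_{i=N}^{n-1}d_i\le|\mathcal O_n|\le|G|$ for every $n$, so $\prod_{i\ge 0}d_i<\infty$; but this means $\partial T$ is finite, contradicting the hypothesis. Hence $\partial T_G$ is perfect, so it is homeomorphic to the Cantor set, and therefore so is $\mathcal{M}^e_G$. The main (and essentially only non-routine) point is the cardinality identity displayed above: it is exactly the uniqueness of the child of $\mathcal O_n$ in $T_G$ that forces all children in $T$ of the orbit $\mathcal O_n$ to merge into a single orbit, after which the finiteness of $G$ rules out an eventually non-branching ray of $T_G$.
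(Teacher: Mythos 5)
Your proof is correct, and it takes a noticeably different (and in fact more careful) route than the paper's at the one non-routine point. Both arguments reduce, via Theorem~\ref{thm:EI_homeo_bndry}, to showing that $\partial T_G$ is a Cantor set, and both use finiteness of $G$ to bound orbit sizes; but the paper argues globally that there is a single level $N$ beyond which every vertex of $T_G$ has the same degree as the corresponding vertices of $T$, so that branching in $T_G$ eventually mirrors branching in $T$. That uniform stabilization claim is delicate: orbit sizes stabilize along each ray, but growth events can occur at arbitrarily deep levels on different rays (e.g.\ for $g=(g,\sigma)\in\Aut(T_2)$ of order $2$, the singleton orbit $\{0^{n}1\}$ has a single size-two child orbit for every $n$), so the paper's argument really only works ray by ray. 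Your argument is exactly that ray-by-ray version, sharpened by the cardinality identity $|\mathcal O_{n+1}|=|\mathcal O_n|\,d_n$ for a vertex of $T_G$ with a unique child: an eventually non-branching ray of $T_G$ would force a single $G$-orbit to have size $|\mathcal O_N|\prod_{i=N}^{n-1}d_i$, which exceeds $|G|$ unless $d_i=1$ eventually, contradicting $|\partial T|=\infty$. This buys a fully local argument that avoids the uniform-$N$ issue, at the modest cost of invoking Brouwer's characterization of the Cantor set (nonempty, compact, metrizable, totally disconnected, perfect) rather than comparing $T_G$ with $T$ directly; your preliminary reduction to a faithful action so that Theorem~\ref{thm:EI_homeo_bndry} applies is also a worthwhile point that the paper leaves implicit.
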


\begin{proof}
Since $G$ is finite, the size of each orbit is bounded by $|G|$. Therefore, there is only a finite number of times when the size of the orbit grows while passing from a vertex to its child in the orbit tree. Thus, there is an integer $N$ such that for each vertex of $T_G$ of level at least $N$ corresponding to an orbit $\mathcal O$, and any child of this vertex corresponding to an orbit $\mathcal O'$, we have $|\mathcal O|=|\mathcal O'|$. Hence, the structure of the subtrees of the orbit tree $T_{G}$ hanging down from the vertices of level $N\neq 0$ will coincide with the structure of corresponding subtrees of $T$. Equivalently, for each $n\geq N$ the degrees of all vertices of level $n$ in $T_{G}$ coincide with the degree of vertices of level $n$ in $T$. Since $T$ has infinite boundary, it has an infinite number of levels where branching is happening. This implies that the same is true for $T_G$ as well, so $\partial T_G$ is homeomorphic to the Cantor set.
\end{proof}

In the opposite case when a group $G$ acts spherically transitively on an infinite spherically homogeneous rooted tree, the orbit tree is just a 1-ary rooted tree in which every vertex has exactly one child, its boundary consists of one point that corresponds to the unique (ergodic) invariant probability measure on $\partial T$. This particular case is considered in~\cite[Proposition~6.5]{gns00:automata}. Note, that in the case of a regular rooted tree $T_d$, according to~\cite{gawron_ns:conjugation}, an automorphism acts spherically transitively on $T_d$ if and only if it is conjugate to the, so-called, adding machine. A more general approach to adding machines acting on Cantor sets is developed in~\cite{buescu_s:liapunov_stability95,buescu_s:liapunov_stability06}, where their classification is given in terms of their spectral properties.

In the case when a spherically homogeneous tree $T$ is constructive, i.e. the sequence $\{d_n\}_{n\geq 0}$, where $d_n$ is the degree of vertices on the $n$-th level, is recursive, we formulate the following algorithmic questions.

\begin{question}
Let $G$ be an automaton group (or, more generally, a group acting on a constructive spherically homogeneous tree $T$ with infinite boundary).
\begin{itemize}
\item Is there a way to algorithmically describe the structure of the orbit tree $T_G$?
\item Is there an algorithm that checks whether $\partial T_G$ is finite, or even consists of one point (equivalently, whether $G$ acts level-transitively on the tree), or is homeomorphic to the Cantor set?
\end{itemize}
\end{question}

\subsection{The Universal Grigorchuk group}
\label{ssec:univ_grigorch}
Another example that we study here is the universal Grigorchuk group\footnote{The second author insists on the use of this terminology}. This group is defined as a universal group for the family of Grigorchuk groups $G_{\omega}$ constructed in~\cite{grigorch:degrees}. Namely, it is the quotient $F_4/N$ of the free group $F_4$ of rank 4 by the normal subgroup $N=\cap_{\omega\in\{0,1\}^\omega}N_{\omega}$, where $G_{\omega}=F_4/N_{\omega}$. For detailed information about this group we refer the reader to~\cite{grigorch:solved,benli_gn:universal}. The main open question related to this group is whether it is amenable. For our purposes we shall only need the realization of this group as an automaton group.

\begin{proposition}[\cite{benli_gn:universal}]
The universal Grigorchuk group $U$ can be defined as a group generated by the 4-state automaton over the 6-letter alphabet $\Sigma=\{1,2,3,4,5,6\}$ given by the following wreath recursion:
\[\begin{array}{lcl}
a &=& (1, 1, 1, 1, 1, 1)(14)(25)(36),\\
b &=& (a, a, 1, b, b, b),\\
c &=& (a, 1, a, c, c, c),\\
d &=& (1, a, a, d, d, d).
\end{array}
\]
\end{proposition}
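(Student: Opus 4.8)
The plan is to show directly that the automaton group $\mathds G(\A)$ generated by the displayed automaton $\A$ is isomorphic to $U=F_4/N$, by producing inside the $6$-ary tree $\Sigma^*$ a $\mathds G(\A)$-invariant family of binary subtrees $\{T_\omega\}_{\omega\in\{0,1,2\}^\omega}$ on which $\mathds G(\A)$ acts, respectively, through the Grigorchuk groups $G_\omega$ of \cite{grigorch:degrees}. The first bookkeeping step is to relabel the alphabet as $\Sigma=\{0,1,2\}\times\{L,R\}$, say $1=(0,L)$, $2=(1,L)$, $3=(2,L)$, $4=(0,R)$, $5=(1,R)$, $6=(2,R)$; then the permutation $(14)(25)(36)$ defining $a$ is exactly the swap $L\leftrightarrow R$ of the second coordinate, so $a$ flips the second coordinate of the first letter and has trivial sections, while $b,c,d$ fix the first letter and recurse.

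First I would show that the map $\Sigma^*\to\{0,1,2\}^*$ recording the sequence of first coordinates of the letters of a word is $\mathds G(\A)$-invariant; equivalently, that every element of $\mathds G(\A)$ sends each word to a word with the same first-coordinate sequence. Indeed each generator $a,b,c,d$ sends a letter $(x,\varepsilon)$ to a letter with the same first coordinate $x$; the permutations of $\Sigma$ with this property form a subgroup of $\Sym(\Sigma)$; and every section of a state of $\A$ is again a state of $\A$ (or the identity); so the claim follows by a routine self-similarity argument. Consequently, for each $\omega\in\{0,1,2\}^\omega$ the words whose first-coordinate sequence is a prefix of $\omega$ are the vertices of a $\mathds G(\A)$-invariant binary subtree $T_\omega\subseteq\Sigma^*$, and $\Sigma^\omega=\bigsqcup_{\omega}\partial T_\omega$.

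Next I would identify the restricted action on each $T_\omega$. Identifying $T_\omega\cong T_2$ by remembering only the second coordinates $\varepsilon\in\{L,R\}$, the first coordinate of the $n$-th letter of a word in $T_\omega$ is $\omega_n$, so below an $R$-letter the recursion continues with the shifted sequence $\sigma\omega$. Reading the wreath recursions $b=(a,a,1,b,b,b)$, $c=(a,1,a,c,c,c)$, $d=(1,a,a,d,d,d)$ at the first letter $(\omega_1,L)$ gives exactly the first-level section triple that \cite{grigorch:degrees} attaches to ``type'' $\omega_1$ in the definition of $b_\omega,c_\omega,d_\omega$, namely $(a,a,1)$, $(a,1,a)$, $(1,a,a)$ for $\omega_1=0,1,2$; reading them at $(\omega_1,R)$ reproduces $b,c,d$. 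A level-by-level comparison then shows that $a|_{T_\omega},b|_{T_\omega},c|_{T_\omega},d|_{T_\omega}$ satisfy exactly the recursion defining $a,b_\omega,c_\omega,d_\omega$, so the restriction homomorphism $\mathds G(\A)\to\Aut(T_\omega)$ has image $G_\omega$ and, composed with the canonical epimorphism $\theta\colon F_4\twoheadrightarrow\mathds G(\A)$, yields the defining epimorphism $F_4\twoheadrightarrow G_\omega$, whose kernel is $N_\omega$.

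To conclude, observe that $w\in F_4$ lies in $\ker\theta$ iff $\theta(w)$ acts trivially on $\Sigma^*$, iff it acts trivially on every $T_\omega$ (using $\Sigma^\omega=\bigsqcup_\omega\partial T_\omega$ and the fact that a tree automorphism fixing every ray fixes every vertex), iff $w\in N_\omega$ for all $\omega$; hence $\ker\theta=\bigcap_\omega N_\omega=N$ and $\mathds G(\A)=\theta(F_4)\cong F_4/N=U$. The main obstacle, such as it is, is the middle step: one must fix the $\{0,1,2\}\times\{L,R\}$ labelling so that the six-letter wreath recursion for $a,b,c,d$ matches, symbol by symbol, the two-letter recursion for $b_\omega,c_\omega,d_\omega$ --- in particular that the section triples $(a,a,1)/(a,1,a)/(1,a,a)$ line up with types $0/1/2$ and that the letters $4,5,6$ genuinely propagate the generators. (If $N$ is instead defined as the intersection of the $N_\omega$ over only a dense subset of $\{0,1,2\}^\omega$, the last equality is still valid, since $\{\omega\colon w\in N_\omega\}$ is closed for each fixed $w\in F_4$.)
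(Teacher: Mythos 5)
Your argument is correct, and it is worth noting that the paper itself offers no proof of this proposition: it is imported verbatim from the reference \cite{benli_gn:universal} (listed as ``in preparation''), so there is nothing in the text to compare against. What you have written is essentially the standard justification for this realization of $U$, and all the steps check out: the relabelling $\Sigma\cong\{0,1,2\}\times\{L,R\}$ makes $(14)(25)(36)$ the flip of the second coordinate; the first-coordinate sequence is preserved by each generator and by all sections, hence by the whole group, which yields the invariant binary subtrees $T_\omega$ with $\partial T=\bigsqcup_\omega\partial T_\omega$; the section triples $(a,a,1)$, $(a,1,a)$, $(1,a,a)$ at the letters $1,2,3$ and the self-reproducing sections at $4,5,6$ make $a|_{T_\omega},b|_{T_\omega},c|_{T_\omega},d|_{T_\omega}$ satisfy exactly the recursion for $a,b_\omega,c_\omega,d_\omega$ (which determines them uniquely, since a wreath recursion determines an automorphism level by level); and triviality on $\Sigma^*$ is equivalent to triviality on every $T_\omega$ because the tree has no leaves, so $\ker(F_4\to\mathds G(\A))=\bigcap_\omega N_\omega=N$. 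One small caveat: the paper writes $N=\cap_{\omega\in\{0,1\}^\omega}N_\omega$, which is surely a typo for $\{0,1,2\}^{\mathbb N}$ (the $6=3\times2$ alphabet only makes sense for the three-symbol index set); your closing remark about dense subsets of $\{0,1,2\}^{\mathbb N}$ does not repair the literal statement, since $\{0,1\}^{\mathbb N}$ is a closed nowhere dense subset rather than a dense one, but it correctly handles the genuinely relevant variations of the definition.
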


Note that $\Sigma$ is partitioned into three disjoint alphabets $\Sigma_1=\{1,4\}$, $\Sigma_2=\{2,5\}$ and $\Sigma_3=\{3,6\}$. It follows immediately from the wreath recursion that if for $g\in U$ and $x_1x_2\ldots x_n\in\Sigma^n$ we have $g(x_1x_2\ldots x_n)=y_1y_2\ldots y_n$ for some $y_1y_2\ldots y_n\in\Sigma^n$, then for each $i$ letters $x_j$ and $y_j$ must belong to the same $\Sigma_i$. The next proposition shows that this is the only obstruction to transitivity.

\begin{proposition}
The orbits of the action of $U$ on level $n$ of $\Sigma^*$ are Cartesian products of the alphabets $\Sigma_{i_1}\Sigma_{i_2}\ldots\Sigma_{i_n}$, where $i_j\in\{1,2,3\}$.
\end{proposition}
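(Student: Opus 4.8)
The plan is to promote the inclusion already observed before the proposition into an equality: the paragraph preceding the statement shows that every $U$-orbit on level $n$ is contained in a product set $\Sigma_{i_1}\Sigma_{i_2}\cdots\Sigma_{i_n}$, and $U$ preserves each such set, so it suffices to prove that $U$ acts \emph{transitively} on each $W:=\Sigma_{i_1}\Sigma_{i_2}\cdots\Sigma_{i_n}$. I would do this by induction on $n$. The cases $n=0,1$ are immediate, since $a$ acts on the first level as $(14)(25)(36)$ and hence already swaps the two letters of each $\Sigma_i$.

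For the inductive step, fix $i_1,\dots,i_n$, write $\Sigma_{i_1}=\{i_1,i_1+3\}$, set $W'=\Sigma_{i_2}\cdots\Sigma_{i_n}$, so that $W=i_1W'\sqcup(i_1+3)W'$. Two reductions drive the argument. First, because every first-level section of $a$ is trivial (the tuple in the recursion of $a$ is $(1,1,1,1,1,1)$), we have $a(i_1w)=(i_1+3)w$ and $a((i_1+3)w)=i_1w$ for all $w\in W'$; thus $a$ interchanges the two halves of $W$ without altering the tails, and $W$ is a single $U$-orbit as soon as $i_1W'$ is a single orbit under the subgroup $H\leq U$ of elements fixing the first level pointwise. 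Second, for $g\in H$ one has $g(i_1w)=i_1\cdot g|_{i_1}(w)$, and since elements of $H$ fix the first level the assignment $g\mapsto g|_{i_1}$ is multiplicative on $H$, so $K:=\{g|_{i_1}:g\in H\}$ is a \emph{subgroup} of $U$. As $g|_{i_1}\in U$ and $U$ preserves $W'$, the action of $H$ on $i_1W'$ is identified with the action of $K$ on $W'$. Hence it is enough to show $K=U$, for then the induction hypothesis ($U$ transitive on $W'$) finishes the step.

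To prove $K=U$ I would use the explicit recursion. Since $a^2=1$ and $\{1,a\}$ is a Schreier transversal for $H$ in $U$ (the map $U\to\Sym(\Sigma)$ has image of order two, with kernel $H$), the group $H$ is generated by $b,c,d$ together with $aba$, $aca$, $ada$. Reading off first-level sections, $(b|_{i_1},c|_{i_1},d|_{i_1})$ equals $(a,a,1)$, $(a,1,a)$, $(1,a,a)$ for $i_1=1,2,3$ respectively, so in every case $a\in K$; and since $a|_x=1$ for all $x\in\Sigma$ we get $(aba)|_{i_1}=b|_{a(i_1)}=b|_{i_1+3}=b$, and similarly $(aca)|_{i_1}=c$ and $(ada)|_{i_1}=d$, so $b,c,d\in K$ as well. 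Therefore $K\supseteq\langle a,b,c,d\rangle=U$, completing the induction.

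The only real obstacle is the bookkeeping behind $K=U$: identifying $H$ and its generators, checking that $g\mapsto g|_{i_1}$ is a homomorphism on $H$, and keeping the three "colours" $\Sigma_1,\Sigma_2,\Sigma_3$ straight so that the section tables for $b,c,d$ and for $aba,aca,ada$ are correct. Everything else is formal; the argument hinges on the small miracle that for each first-level vertex $i_1$ at least two of $b,c,d$ restrict there to $a$, while their $a$-conjugates restrict there to $b,c,d$ themselves.
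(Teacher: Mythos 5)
Your proof is correct, and it is worth comparing with the one in the paper, which is much shorter: the paper also proceeds by induction on $n$, but it simply invokes the fact that $U$ is \emph{self-replicating} (for every vertex $v$ the section map $\St_U(v)\to U$, $g\mapsto g|_v$, is onto), so that for each $v$ there is $g\in U$ fixing $v$ with $g|_v=a$; since $a$ swaps the two letters of each $\Sigma_i$, this extends transitivity from the orbit of $v$ on level $n$ to the full product on level $n+1$. The paper does not verify self-replication. Your argument runs the induction in the other direction --- reducing transitivity on $\Sigma_{i_1}W'$ to transitivity of the section group $K=\{g|_{i_1}:g\in H\}$ on the tail $W'$ --- and, more importantly, actually proves the one instance of self-replication it needs: the Reidemeister--Schreier generators $b,c,d,aba,aca,ada$ of the level-one stabilizer $H$, together with the section tables $(b|_{i_1},c|_{i_1},d|_{i_1})=(a,a,1),(a,1,a),(1,a,a)$ and $(aba)|_{i_1}=b|_{i_1+3}=b$ (etc.), give $K\supseteq\langle a,b,c,d\rangle=U$. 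All of these computations check out against the wreath recursion, and the reduction steps (that $a$ interchanges $i_1W'$ and $(i_1+3)W'$ because its first-level sections are trivial, and that $g\mapsto g|_{i_1}$ is multiplicative on $H$ so that $K$ is a subgroup contained in $U$ by self-similarity) are all sound. So your proof buys self-containedness at the cost of length: a reader who does not already know that $U$ is self-replicating can follow your argument completely, whereas the paper's version defers that fact to the literature. One cosmetic remark: depending on the composition convention, $g\mapsto g|_{i_1}$ on $H$ may be an anti-homomorphism rather than a homomorphism, but its image is a subgroup either way, so nothing changes.
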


\begin{proof}
The proof follows by induction on levels and uses the fact that $U$ is \emph{self-replicating}. In other words, for each $v\in\Sigma^*$ the natural endomorphism $\phi_v$ from the stabilizer $\St_U(v)=\{g\in U\mid g(v)=v\}$ of $v$ in $U$ to $U$, given by $\phi_v(g) = g|_v$, is surjective. In particular, for each $v\in\Sigma^*$ there is $g\in U$ such that $g(v)=v$ and $g|_v=a$. The existence of such element proves the induction step as $a$ permutes the letters in each $\Sigma_i$.
\end{proof}

We directly obtain the following corollary related to the ergodic decomposition.

\begin{corollary}
The orbit tree $T_U$ of the action of $U$ is isomorphic to the 3-ary regular rooted tree, and therefore the space $\mathcal{M}^e_U$ of ergodic invariant probability measures is homeomorphic to the Cantor set.
\end{corollary}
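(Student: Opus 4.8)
The plan is to read off the corollary from the previous proposition together with the main theorem. By the preceding proposition, the orbits of $U$ on level $n$ of $\Sigma^*$ are exactly the sets of the form $\Sigma_{i_1}\Sigma_{i_2}\cdots\Sigma_{i_n}$ with each $i_j\in\{1,2,3\}$. So the vertices of $T_U$ on level $n$ are in bijection with the words $i_1 i_2\cdots i_n\in\{1,2,3\}^n$. First I would verify that the adjacency in $T_U$ matches the adjacency in the $3$-ary tree: the orbit $\Sigma_{i_1}\cdots\Sigma_{i_n}$ on level $n$ is, by definition of the orbit tree, adjacent to the orbit on level $n+1$ containing some child of one of its vertices; a child of a vertex in $\Sigma_{i_1}\cdots\Sigma_{i_n}$ is obtained by appending a letter $x\in\Sigma$, and that letter lies in exactly one $\Sigma_{i_{n+1}}$, so the child lies in $\Sigma_{i_1}\cdots\Sigma_{i_n}\Sigma_{i_{n+1}}$. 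Hence each vertex $i_1\cdots i_n$ of $T_U$ has exactly three children, namely $i_1\cdots i_n 1$, $i_1\cdots i_n 2$, $i_1\cdots i_n 3$, and the root (the one-element orbit $\{\emptyset\}$) has three children as well. This identifies $T_U$ with the $3$-ary regular rooted tree $T_3$.

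Next I would invoke Theorem~\ref{thm:EI_homeo_bndry}: the group $U$ is a countable (finitely generated) group acting faithfully by automorphisms on the spherically homogeneous (in fact $6$-regular) rooted tree $\Sigma^*$, so part (a) of that theorem gives a homeomorphism $\beta\colon \partial T_U\to \mathcal{M}^e_U$. Since $T_U\cong T_3$, we have $\partial T_U\cong \partial T_3=\{1,2,3\}^\omega$, which is homeomorphic to the Cantor set. Composing, $\mathcal{M}^e_U$ is homeomorphic to the Cantor set.

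I do not anticipate a genuine obstacle here; the only point that needs a line of care is the adjacency check described above, i.e. confirming that the orbit structure assembles into the $3$-ary tree rather than merely having three-element fibers level by level — one must observe that appending one of the three ``types'' of letters is precisely what passes between adjacent orbits, so the tree structure is literally that of $\{1,2,3\}^*$. Everything else is a direct citation of the proposition just proved and of part (a) of the main theorem. (As a minor remark, one could equally apply the earlier proposition about finite groups in spirit — the branching of $T_U$ happens at every level, so $\partial T_U$ has no isolated points and is a compact totally disconnected metrizable space with no isolated points, hence Cantor — but the quickest route is simply $T_U\cong T_3$.)
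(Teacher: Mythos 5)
Your proof is correct and follows the same route as the paper, which states the corollary as an immediate consequence of the preceding proposition (identifying the orbits with products $\Sigma_{i_1}\cdots\Sigma_{i_n}$) combined with part (a) of Theorem~\ref{thm:EI_homeo_bndry}. You merely spell out the adjacency check that the paper leaves implicit.
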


\subsection{Sushchansky groups}
\label{ssec:sushch}
Suchchansky introduced a class of infinite $p$-groups generated by pairs of initial automata acting on the $p$-ary rooted tree (using the language of Kaloujnine tableaux~\cite{kalou:la_structure}) in~\cite{sushch:burnside}. These groups were later studied in~\cite{bondarenko_s:sushch}, where, in particular, it was proved that they have intermediate growth and the structure of the orbit trees was computed.

Let $\sigma=(0,1,\ldots,p-1)$ be a cyclic
permutation of the alphabet $\Sigma=\{0,1,\ldots,p-1\}$. With a slight abuse of notation, depending on
the context, $\sigma$ will also denote the automorphism of $\Sigma^*$ of
the form $(1,1,\ldots,1)\sigma$.

Given an arbitrary linear order $\lambda=\{(\alpha_i,\beta_i)\}$ on $\Sigma^2$ we define the Sushchansky group $G_{\lambda}$ generated by the two automorphisms $A$ and $B_{\lambda}$ of $T_p$ with the set of vertices $\Sigma^*$. We first define words
$u,v\in \Sigma^{p^2}$ in the following way:
\[
u_i=\left\{%
\begin{array}{ll}
    0, & \hbox{ if } \beta_i=0; \\
    1, & \hbox{ if } \beta_i\neq 0. \\
\end{array}%
\right. \qquad \qquad
v_i=\left\{%
\begin{array}{ll}
    1, & \hbox{ if } \beta_i=0; \\
    -\frac{\alpha_i}{\beta_i}, & \hbox{ if } \beta_i\neq 0. \\
\end{array}%
\right.
\]
The words $u$ and $v$ encode the actions of $B_{\lambda}$ on the
words $00\ldots 01*$ and $10\ldots 01*$, respectively. Using the
words $u$ and $v$ we can construct automorphisms
$q_1,\ldots,q_{p^2}, r_1,\ldots, r_{p^2}$ of the tree $\Sigma^{*}$ by the
following recurrent formulas:
\begin{equation}\label{eqn def aut q r}
q_i=(q_{i+1},\sigma^{u_i},1,\ldots,1), \qquad
r_i=(r_{i+1},\sigma^{v_i},1,\ldots ,1),
\end{equation}
for $i=1,\ldots,p^2$, where the indices are considered modulo $p^2$,
i.e. $i=i+np^2$ for any $n$.

These automorphisms $q_i$ and $r_i$ are
precisely the restrictions of $B_{\lambda}$ on the words
$00(0)^{i-1+np^2}$ and $10(0)^{i-1+np^2}$, respectively, for any
$n\geq 0$.

The action of the tableau $A$ is given by:
\[
A=(1,\sigma,\sigma^2,\ldots,\sigma^{p-1})\sigma;
\]
while $B_{\lambda}$ acts trivially on the second level and the
action on the rest is given by the sections:
\[
B_{\lambda}|_{00}=q_1, \quad B_{\lambda}|_{10}=r_1, \quad
B_{\lambda}|_{21}=\sigma
\]
and all the other sections are trivial. In particular, the
automorphisms $A$ and $B_{\lambda}$ are finite-state and Sushchansky
group $G_{\lambda}$ is generated by two finite initial automata, whose structure
is shown in Figure~\ref{aut_general} (where the double circled nodes correspond to generators $A$ and $B_{\lambda}$).

\begin{figure}[h]
\begin{center}
\epsfig{file=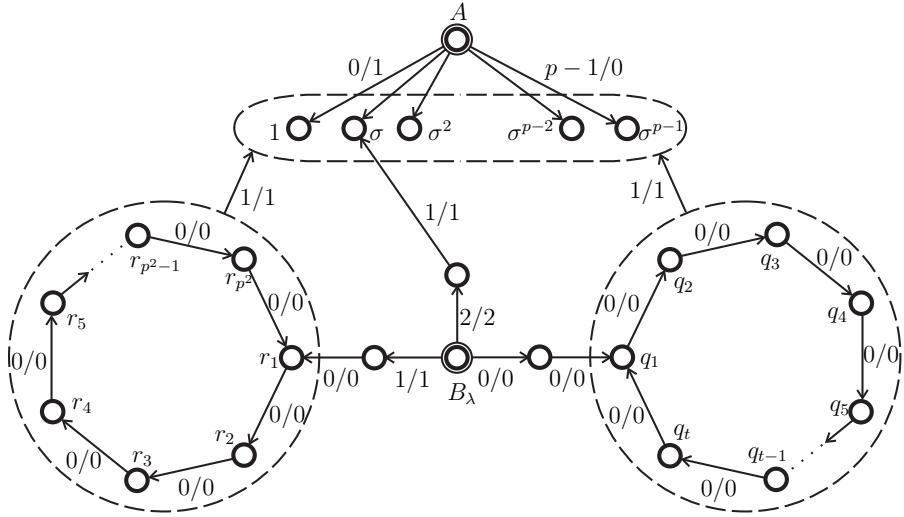,width=340pt}
\end{center}
\caption{The Structure of Sushchansky automaton\label{aut_general}}
\end{figure}

The following proposition describes the orbit tree
\begin{proposition}[\cite{bondarenko_s:sushch}]\label{prop orbit tree}
The structure of the orbit tree $T_{G_\lambda}$ does not depend on
the type $\lambda$ and is shown in Figure~\ref{fig orbit tree}. Namely, there is only one vertex on the first level of the tree that has $p$ children, one of which is the root of a line, and the others are the roots of regular $p$-ary trees.
\begin{figure}[h]
\begin{center}
\includegraphics{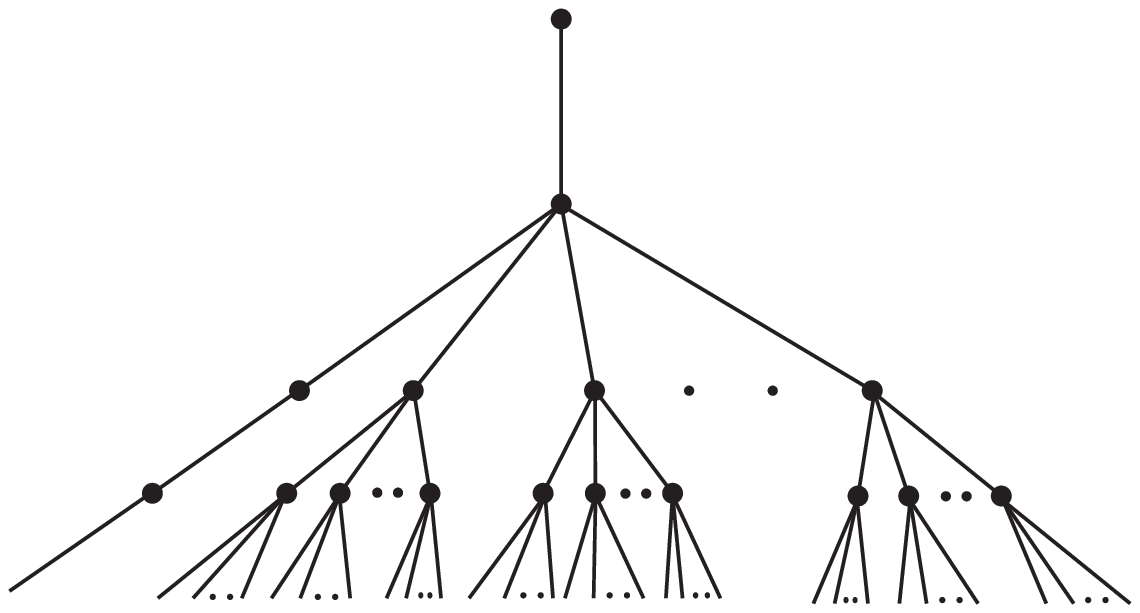}
\caption{The Orbit tree $T_{G_\lambda}$ of Sushchansky group\label{fig orbit tree}}
\end{center}
\end{figure}
\end{proposition}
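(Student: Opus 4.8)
The plan is to compute the orbit tree $T_{G_\lambda}$ level by level, exploiting the self-similarity of $A$ and $B_\lambda$. The basic tool will be the following standard fact: if $\mathcal O$ is the $G_\lambda$-orbit of a vertex $v\in\Sigma^*$, then the subtree of $T_{G_\lambda}$ hanging below the vertex $\mathcal O$ is canonically isomorphic to the orbit tree of the \emph{section group} $H_v:=\{g|_v\colon g\in\St_{G_\lambda}(v)\}\le\Aut(\Sigma^*)$ acting on $\Sigma^*$. Indeed $vw$ and $vw'$ lie in the same $G_\lambda$-orbit if and only if $w,w'$ lie in the same $H_v$-orbit (if $g(vw)=vw'$ then $g(v)=v$ and $g|_v\in H_v$ maps $w$ to $w'$), and every $G_\lambda$-orbit of a vertex $u$ with $[u]_{|v|}\in\mathcal O$ meets $v\Sigma^*$. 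Moreover, since the section map $g\mapsto g|_v$ is a homomorphism on $\St_{G_\lambda}(v)$, the group $H_v$ is generated by the $|_v$-sections of any Schreier generating set of $\St_{G_\lambda}(v)$.

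First I would treat the top of the tree. Since $A=(1,\sigma,\dots,\sigma^{p-1})\sigma$ acts on the first level as the $p$-cycle $\sigma$, level $1$ of $T$ is a single orbit, so $T_{G_\lambda}$ has exactly one vertex on level $1$. To find its children, compute $H_0$ for $v=0$ (all level-$1$ vertices being in one orbit). With the transversal $\{A^k\colon 0\le k<p\}$ for $\St_{G_\lambda}(0)$, the Schreier generators are $A^p$ and $A^kB_\lambda A^{-k}$ ($0\le k<p$); taking $|_0$-sections and using the chain rule $(gh)|_v=g|_{h(v)}\,h|_v$ together with the identities $(A^p)|_0=\sigma^{p(p-1)/2}=1$, $(A^{p-1})|_1=1$, $(A^{p-2})|_2=\sigma^{-1}$ (all valid since $p$ is an odd prime, because then $\sum_{x=0}^{p-1}x\equiv 0\bmod p$) and the sections $B_\lambda|_0=(q_1,1,\dots,1)$, $B_\lambda|_1=(r_1,1,\dots,1)$, $B_\lambda|_2=(1,\sigma,1,\dots,1)$, $B_\lambda|_x=1$ for $x\ge 3$, one obtains
\[H_0=\langle (q_1,1,\dots,1),\ (r_1,1,\dots,1),\ (\sigma,1,\dots,1)\rangle.\]
In particular $H_0$ fixes the first level of $\Sigma^*$ pointwise, so $G_\lambda$ has exactly $p$ orbits on level $2$, represented by $00,01,\dots,0(p-1)$; hence the unique level-$1$ vertex of $T_{G_\lambda}$ has $p$ children, as claimed.

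Next I would identify the two kinds of branches below those $p$ children. Because $H_0$ fixes level one of $\Sigma^*$ pointwise, $\St_{G_\lambda}(0y)=\St_{G_\lambda}(0)$ and therefore $H_{0y}=H_0|_y$ for every $y\in\Sigma$. For $y\ne 0$ this gives $H_{0y}=\{1\}$, so $G_\lambda$ acts trivially on $0y\Sigma^*$ and the subtree of $T_{G_\lambda}$ below the orbit of $0y$ is the full $p$-ary tree $T_p$; this accounts for the $p-1$ regular $p$-ary branches. For $y=0$ we get $H_{00}=H_0|_0=K_1$, where $K_i:=\langle q_i,r_i,\sigma\rangle$ (indices modulo $p^2$), and it remains to show that $K_1$ acts spherically transitively on $\Sigma^*$, so that its orbit tree is a $1$-ary tree, i.e.\ the line. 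The same Schreier computation applied to $K_i$, whose quotient by the pointwise stabilizer of level $1$ is $\langle\sigma\rangle\cong\Z/p$, yields $\St_{K_i}(0)|_0=\langle q_{i+1},r_{i+1},\sigma^{u_i},\sigma^{v_i}\rangle$; and since $\sigma\in\langle\sigma^{u_i},\sigma^{v_i}\rangle$ always (if $\beta_i=0$ then $v_i=1$, and if $\beta_i\ne0$ then $u_i=1$), this group contains $K_{i+1}$. One then proves by induction on the level $n$ that $K_i$ is transitive on $\Sigma^n$ for \emph{every} index $i$: the base case $n=1$ is $\sigma$ acting as a $p$-cycle, and in the step the first level is transitive via $\sigma$ while $\St_{K_i}(0)|_0\supseteq K_{i+1}$ is transitive on $\Sigma^n$ by the inductive hypothesis. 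Hence $K_1$ is spherically transitive and the branch below the orbit of $00$ is the line. Finally, the only properties of the words $u,v$ used are $u_i\in\{0,1\}$ and $\beta_i=0\Leftrightarrow u_i=0\Leftrightarrow v_i=1$, so the resulting orbit tree does not depend on the type $\lambda$.

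The main obstacle is the spherical transitivity of $K_1=\langle q_1,r_1,\sigma\rangle$: the natural recursion replacing $K_i$ by $K_{i+1}$ never terminates, because the indices live in $\Z/p^2$, so a naive descent "$K_i$ transitive $\Leftarrow K_{i+1}$ transitive" only produces a cycle of implications and proves nothing. The key point is to reorganize it as an induction on the depth $n$, where the self-similar recursion legitimately reduces level $n+1$ for $K_i$ to level $n$ for $K_{i+1}$. The remaining work — carrying out the Reidemeister--Schreier computations and applying the section chain rule to pin down $H_0$ and $\St_{K_i}(0)|_0$ — is routine but must be done with care about the overloaded symbol $\sigma$ (permutation versus automorphism) and about the order of composition in the wreath product.
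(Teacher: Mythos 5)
Your proof is correct. The paper itself offers no argument for this proposition---it is quoted from \cite{bondarenko_s:sushch}---and your reconstruction proceeds along essentially the same lines as that reference: pass to section groups via Schreier generators, note that the sections over $0y$ with $y\neq 0$ are trivial (yielding the $p-1$ regular $p$-ary branches), and establish level transitivity of $K_i=\langle q_i,r_i,\sigma\rangle$ by induction on the depth $n$ with the statement quantified over all indices $i$ simultaneously, which is exactly the point where a naive descent on $i$ alone would be circular. The only (immaterial) slip is the closing biconditional ``$u_i=0\Leftrightarrow v_i=1$'': one can have $v_i=1$ with $\beta_i\neq 0$; what the argument actually uses is the correct statement in your parenthetical, namely that for every $i$ at least one of $\sigma^{u_i},\sigma^{v_i}$ equals $\sigma$.
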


\subsection{Lamplighter group}
\label{ssec:lamp}
Recall that the lamplighter group $\LL$, the permutational wreath product $(\Z/2\Z)\wr\Z\cong\bigl(\oplus_{\Z}(\Z/2Z)\bigr)\rtimes\Z$, can be realized as an automaton group generated by the automaton shown in Figure~\ref{fig:lamplighter_aut} with the following wreath recursion:
\begin{equation}
\label{eq:lamplighter_wr}
\begin{array}{lll}
a&=&(b,a)\sigma,\\
b&=&(b,a).\\
\end{array}
\end{equation}

\begin{figure}[h]
\begin{center}
\includegraphics[width=150pt]{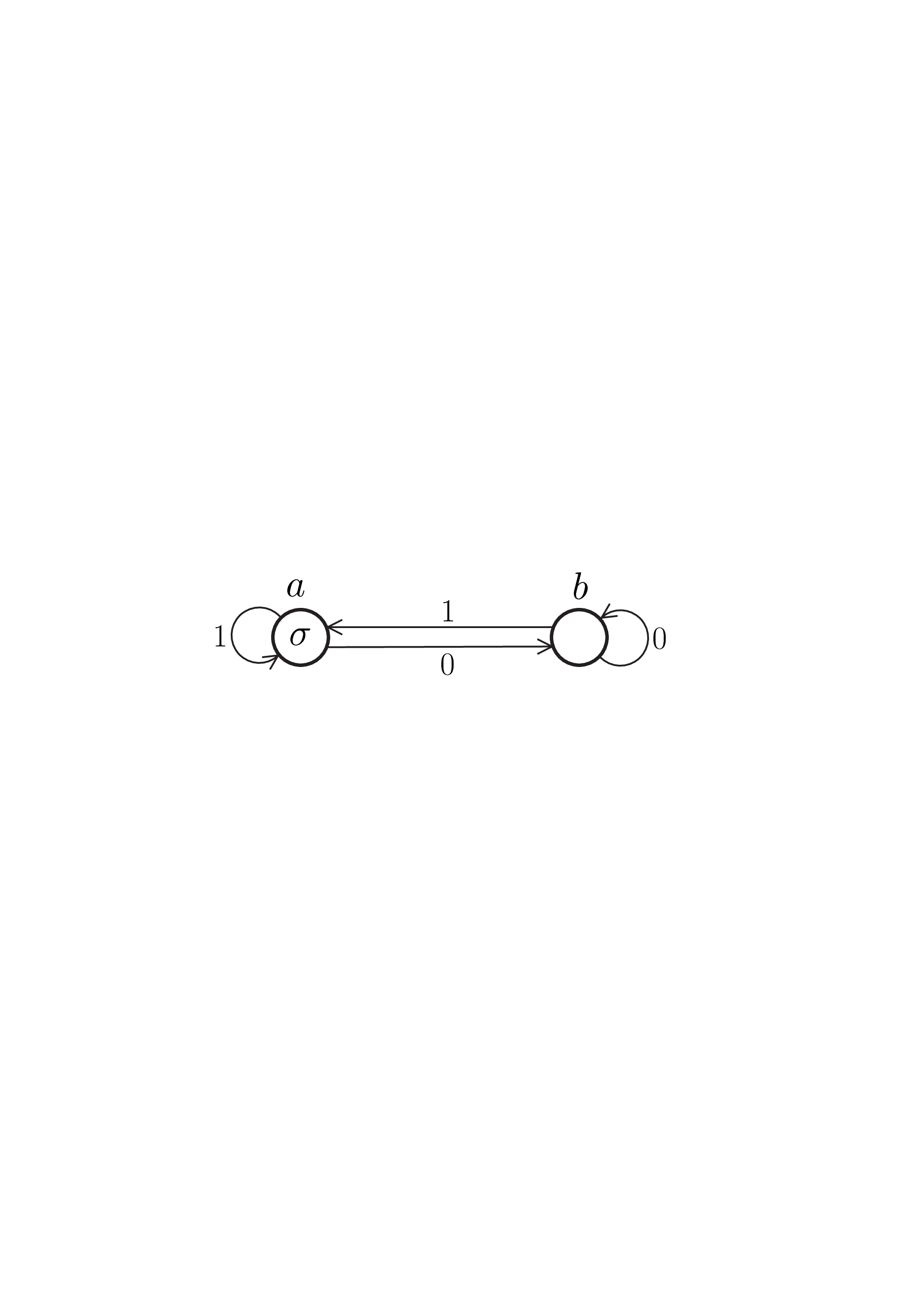}
\caption{Automaton generating the lamplighter group $\LL$\label{fig:lamplighter_aut}}
\end{center}
\end{figure}

In this subsection we give a complete description of the ergodic decompositions for the cyclic subgroups $\langle a\rangle$ and $\langle b\rangle$ of $\LL$. Let $T_{\langle a\rangle}$ and $T_{\langle b\rangle}$ be the corresponding orbit trees of the actions of these subgroups on the binary tree $T$.

\begin{theorem}
\label{thm:lamplighter}
\begin{itemize}
\item[(a)] In the orbit tree $T_{\langle a\rangle}$  all vertices on levels $2^n-1, n\geq0$ have one child and all vertices on other levels have two children (see Figure~\ref{fig:orbit_tree_a}). The space of ergodic components of the action of $\langle a\rangle$ on $X^\omega$ is homeomorphic to the Cantor set.
\item[(b)] The orbit tree $T_{\langle b\rangle}$ is recursively obtained by declaring that the root of the tree has two children that are roots of trees $T_{\langle b\rangle}$ and $T_{\langle a\rangle}$ (see Figure~\ref{fig:orbit_tree_b}). The space of ergodic components of the action of $\langle b\rangle$ on $X^\omega$ is again homeomorphic to the Cantor set.
\end{itemize}
\end{theorem}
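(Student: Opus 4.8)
The plan is to identify both orbit trees explicitly by using the standard realization of $\LL$ as a group of affine maps of the ring $\mathbb F_2[[t]]$ of formal power series over the two-element field. First I would set up the identification $\partial T=\{0,1\}^\omega\cong\mathbb F_2[[t]]$ sending a ray $x_1x_2x_3\ldots$ to $x_1+x_2t+x_3t^2+\cdots$ (so that level $n$ of $T$ becomes $\mathbb F_2[[t]]/(t^n)$); a direct unwinding of the wreath recursion~\eqref{eq:lamplighter_wr} shows that under this identification $b$ acts as $f\mapsto(1+t)f$ and $a$ acts as $f\mapsto(1+t)f+1$. Hence $a^k(f)=(1+t)^kf+s_k$ with $s_k=1+(1+t)+\cdots+(1+t)^{k-1}$, and, since $(1+t)^k=1+ts_k$, this rearranges to the key identity $a^k(f)+f=s_k\,(tf+1)$ (everything in characteristic $2$).

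For part (a), the crucial point is that $tf+1$ has constant term $1$, hence is a \emph{unit} in $\mathbb F_2[[t]]$, so $a^k(f)\equiv f\pmod{t^n}$ \emph{if and only if} $s_k\equiv0\pmod{t^n}$ — a condition independent of $f$. Writing $k=2^j\cdot(\text{odd})$ and using $(1+t)^{2^j}=1+t^{2^j}$, one finds $(1+t)^k+1=t^{2^j}\cdot(\text{unit})$ and hence $s_k=t^{2^j-1}\cdot(\text{unit})$, so $s_k\equiv0\pmod{t^n}$ precisely when $2^j\geq n+1$, i.e. precisely when $2^{\mu(n)}\mid k$ where $\mu(n)=\lceil\log_2(n+1)\rceil$. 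Therefore every $\langle a\rangle$-orbit on level $n$ has the same size $2^{\mu(n)}$, there are $2^{n-\mu(n)}$ of them, and a vertex of $T_{\langle a\rangle}$ on level $n$ has exactly $2^{1+\mu(n)-\mu(n+1)}$ children (here one only needs that the level-$(n+1)$ vertices lying below a single orbit again form a union of $\langle a\rangle$-orbits, which holds because the image of an invariant set under the action is invariant). Since $\mu(n+1)-\mu(n)$ equals $1$ when $n+1$ is a power of $2$ and $0$ otherwise, this number is $1$ exactly when $n=2^j-1$ and $2$ otherwise, which is the asserted description of $T_{\langle a\rangle}$. Because infinitely many levels (all $n\neq 2^j-1$) have every vertex branching, each ray of $T_{\langle a\rangle}$ meets infinitely many branch vertices; hence $\partial T_{\langle a\rangle}$ is nonempty, compact, metrizable, totally disconnected, and perfect, so it is a Cantor set, and by Theorem~\ref{thm:EI_homeo_bndry} so is the space of ergodic components.

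For part (b) I would argue by self-similarity rather than by direct computation. Since $b=(b,a)$ fixes both vertices $0$ and $1$ of the first level, the root of $T_{\langle b\rangle}$ has exactly two children, the singleton orbits $\{0\}$ and $\{1\}$. The section of $b$ at the fixed vertex $0$ is $b|_0=b$, so under the canonical isomorphism of the subtree hanging at $0$ with $T$ the action of $\langle b\rangle$ on that subtree is carried to the action of $\langle b\rangle$ on $T$; thus the subtree of $T_{\langle b\rangle}$ below $\{0\}$ is isomorphic to $T_{\langle b\rangle}$ itself. Likewise $b|_1=a$, so the subtree below $\{1\}$ is isomorphic to $T_{\langle a\rangle}$. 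This is exactly the recursive description claimed, and since $T_{\langle a\rangle}$ is already determined it pins down $T_{\langle b\rangle}$ level by level. Finally, any ray of $T_{\langle b\rangle}$ either eventually enters a subtree isomorphic to $T_{\langle a\rangle}$ (and then meets infinitely many branch vertices, by part (a)) or passes through the root of a nested copy of $T_{\langle b\rangle}$ at every level (and each such root has two children); in both cases it meets infinitely many branch vertices, so $\partial T_{\langle b\rangle}$ is again a Cantor set, as is the space of ergodic components of $\langle b\rangle$.

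The main obstacle is the analysis behind part (a): the identity $a^k(f)+f=s_k(tf+1)$ together with the observation that $tf+1$ is a unit is what forces all orbits on a given level to have equal size and makes $T_{\langle a\rangle}$ level-regular; once this is in hand, the description of the children and the Cantor-set conclusion are routine $2$-adic bookkeeping, and the $\langle b\rangle$ case reduces cleanly to the $\langle a\rangle$ case by the elementary self-similar argument above.
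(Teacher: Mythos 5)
Your proof is correct, and it rests on the same foundation as the paper's: the realization of the action on $(\Z/2\Z)[[t]]$ via $b\colon f\mapsto(1+t)f$ and $a\colon f\mapsto(1+t)f+1$, the reduction of orbit sizes to the multiplicative order of $1+t$ modulo powers of $t$ (using that the relevant cofactor is a unit), and the self-similarity $b=(b,a)$ for part (b). The one genuine difference is how the key orbit-size computation is organized. The paper first computes $|\Orb_b(10^k)|$ by induction on the block structure of orbit matrices (the Sierpi\'nski-triangle decomposition~\eqref{eq:decomposition} in Lemma~\ref{lem:orb10}), then transfers to arbitrary vertices via the unit $1+tg(t)$ (Lemma~\ref{lem:orb1w}), and finally to $a$ via $b(1w)=1a(w)$ (Corollary~\ref{cor:orb_a}). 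You instead treat $a$ directly: the affine identity $a^k(f)+f=s_k\,(tf+1)$ with $s_k=\bigl((1+t)^k+1\bigr)/t$, combined with the valuation computation $(1+t)^k+1=t^{2^{v}}\cdot(\text{unit})$ for $v=v_2(k)$, determines the stabilizer of every level-$n$ vertex in one stroke and shows simultaneously that all orbits on a given level have equal size. This is shorter and bypasses the orbit-matrix induction entirely (at the cost of losing the Sierpi\'nski-triangle picture the paper wants to exhibit); both computations ultimately rest on $(1+t)^{2^j}=1+t^{2^j}$ in characteristic $2$, and your orbit sizes $2^{\lceil\log_2(n+1)\rceil}$ agree with the paper's $2^{[\log_2 n]+1}$. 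The child count $2^{1+\mu(n)-\mu(n+1)}$, the Cantor-set conclusion via Theorem~\ref{thm:EI_homeo_bndry}, and the self-similar argument for part (b) all match the paper's reasoning.
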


A useful observation about the lamplighter group made in~\cite{gns00:automata} is that the action of generators $a$ and $b$ can be defined in terms of functions acting on formal power series. We are going to describe this action. The boundary $\partial T$, consisting of infinite sequences over $X$, can be identified with the ring of formal power series $(\Z/2\Z)[[t]]$ via the map
\[a_0a_1a_2a_3\ldots\mapsto a_0+a_1t+a_2t^2+a_3t^3+\cdots\]

We will use this identification. Also, we will associate finite sequences over $X$ with corresponding polynomials in $(\Z/2\Z)[t]$ , which can be viewed as power series with finite number of nonzero terms. For example, $10^k$ and $11$ in $X^*$ will correspond to $1$ and $1+t$ in $(\Z/2\Z)[t]$, respectively. As was observed in~\cite{gns00:automata}, under this identification the actions of $a$ and $b$ on $f(t)\in(\Z/2\Z)[[t]]$ are defined as
\[\begin{array}{lll}
(a(f))(t)&=&(1+t)f(t)+1,\\
(b(f))(t)&=&(1+t)f(t).\\
\end{array}
\]

It will be convenient in the proof to operate with orbits of group actions using the following notion.

\begin{definition}
For an automorphism $g\in\Aut(X^*)$ and for $v\in X^*\cup X^\omega$ whose orbit under the action of $\langle g\rangle$ has size $m\in\mathbb N\cup\{\infty\}$ the \emph{orbit matrix} of $v$ with respect to $g$ is the $n\times |v|$ matrix $M(v,g)$ whose $ij$-th entry contains the $j$-th symbol of $g^{i-1}(v)$ (so that the first row corresponds to $v$ itself).
\end{definition}

\begin{lemma}
\label{lem:orb10}
The size of the orbit $\Orb_b(10^k)$ of the vertex $10^k$ under the action of $b$ is $\displaystyle{2^{[\log_2k]+1}}$.
\end{lemma}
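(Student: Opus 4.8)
The plan is to translate the orbit computation into the power series model and reduce it to finding the multiplicative order of $1+t$ in a truncated polynomial ring. Recall that the word $10^k$ has length $k+1$, so it is a vertex of level $k+1$, and under the identification of finite words with polynomials it corresponds to the constant polynomial $1$, now to be read in $R_k := (\Z/2\Z)[t]/(t^{k+1})$ (a word $a_0a_1\ldots a_k$ corresponds to $a_0+a_1t+\cdots+a_kt^k$, and vertices of level $k+1$ are in bijection with $R_k$). Since the action of a tree automorphism on a fixed level is well defined, the action of $b$ on level $k+1$ is $f(t)\mapsto (1+t)f(t)\bmod t^{k+1}$, hence $b^i(10^k)$ is represented by $(1+t)^i\bmod t^{k+1}$. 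Therefore $|\Orb_b(10^k)|$ equals the multiplicative order of the unit $1+t$ in $R_k$ (it is a unit since its constant term is $1$).

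Next I would invoke the Frobenius endomorphism: in characteristic $2$ one has $(1+t)^{2^m}=1+t^{2^m}$, so $(1+t)^{2^m}\equiv 1\pmod{t^{k+1}}$ if and only if $2^m\ge k+1$. In particular the order of $1+t$ divides $2^M$ with $M=\lceil\log_2(k+1)\rceil$, hence the order is itself a power of $2$, say $2^m$; by the displayed equivalence $2^m$ is then the least power of $2$ with $2^m\ge k+1$, i.e. $m=\lceil\log_2(k+1)\rceil$. (Here I use the standing hypothesis $k\ge 1$, so $k+1\ge 2$ and $1+t\not\equiv 1$ in $R_k$, which makes the order genuinely at least $2$.)

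Finally I would record the elementary identity $\lceil\log_2(k+1)\rceil=[\log_2 k]+1$ valid for every integer $k\ge 1$: writing $2^{m-1}\le k\le 2^m-1$ gives $[\log_2 k]=m-1$, while $2^{m-1}<k+1\le 2^m$ gives $\lceil\log_2(k+1)\rceil=m$. Combining the two computations, $|\Orb_b(10^k)|=2^{\lceil\log_2(k+1)\rceil}=2^{[\log_2 k]+1}$, as claimed.

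I do not expect a serious obstacle here: the argument is essentially the order computation of $1+t$ modulo $t^{k+1}$. The only points demanding care are the length/truncation bookkeeping — the word $10^k$ sits on level $k+1$, so one reduces modulo $t^{k+1}$ and not modulo $t^k$ — and the floor-versus-ceiling juggling in the final identity.
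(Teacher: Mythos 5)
Your proof is correct. It shares with the paper the two essential ingredients --- the identification of $\Orb_b(10^k)$ with the cyclic group generated by $1+t$ in $(\Z/2\Z)[t]/(t^{k+1})$, and the characteristic-$2$ identity $(1+t)^{2^m}=1+t^{2^m}$ --- but the execution is genuinely different. The paper proves by induction the block decomposition of the orbit matrix $M(10^{2^{n+1}-1},b)$ into four copies of $M(10^{2^n-1},b)$ (the Sierpinski-triangle structure), reads off $|\Orb_b(10^{2^{n+1}-1})|=2^{n+1}$ from the shape of that matrix, and then handles intermediate $k$ by observing that the only row of the matrix beginning with $10^k$ is the first one. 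You instead observe directly that the orbit size is the multiplicative order of the unit $1+t$, that this order divides a power of $2$ and is therefore itself a power of $2$, and that $(1+t)^{2^m}\equiv 1 \pmod{t^{k+1}}$ holds precisely when $2^m\ge k+1$; the floor-versus-ceiling bookkeeping at the end is handled carefully and correctly. Your route is shorter and cleaner as a proof of the stated orbit count; what it does not deliver is the explicit self-similar structure of the orbit matrix, which the paper deliberately develops here because that fractal picture is used in the subsequent discussion (the remark on summing Sierpinski triangles, Figure~\ref{fig:sierpinski2}, and Example~\ref{ex:ll2}). As a standalone verification of Lemma~\ref{lem:orb10}, your argument is complete.
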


\begin{proof}
The orbit $\Orb_b(10^k)$ corresponds to the orbit of $1$ under multiplication by $(1+t)$ in $(\Z/2\Z)[[t]]/(t^{k+1})$. This orbit will consist of polynomials
\[(1+t)^n=\Sigma_{i=0}^n \overline{n\choose i}t^i \mod t^{k+1},\]
where by $\overline{x}$ we denote $x \mod 2$. It is well-known (see, for example,~\cite{fine:binomial47}) that the coefficients of these polynomials, plotted as a rectangular array in which the $i$-th row contains the values $\overline{n\choose i}$,  have a fractal shape such as that of a Sierpinski triangle as shown in Figure~\ref{fig:sierpinski}.

\begin{figure}
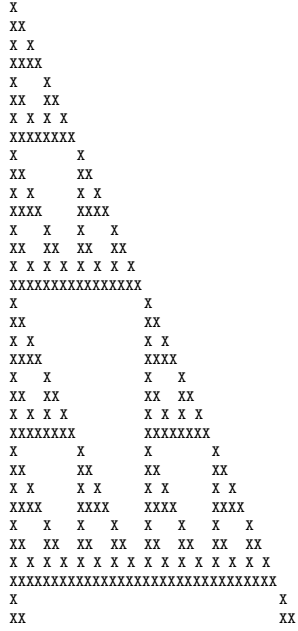

\begin{center}
\tiny{
\begin{verbatim}
                                                                X
                                                                XX
                                                                X X
                                                                XXXX
                                                                X   X
                                                                XX  XX
                                                                X X X X
                                                                XXXXXXXX
                                                                X       X
                                                                XX      XX
                                                                X X     X X
                                                                XXXX    XXXX
                                                                X   X   X   X
                                                                XX  XX  XX  XX
                                                                X X X X X X X X
                                                                XXXXXXXXXXXXXXXX
                                                                X               X
                                                                XX              XX
                                                                X X             X X
                                                                XXXX            XXXX
                                                                X   X           X   X
                                                                XX  XX          XX  XX
                                                                X X X X         X X X X
                                                                XXXXXXXX        XXXXXXXX
                                                                X       X       X       X
                                                                XX      XX      XX      XX
                                                                X X     X X     X X     X X
                                                                XXXX    XXXX    XXXX    XXXX
                                                                X   X   X   X   X   X   X   X
                                                                XX  XX  XX  XX  XX  XX  XX  XX
                                                                X X X X X X X X X X X X X X X X
                                                                XXXXXXXXXXXXXXXXXXXXXXXXXXXXXXXX
                                                                X                               X
                                                                XX                              XX
\end{verbatim}
}
\caption{Initial part of the orbit of $10^\infty$ under the action of powers of $b$, where 1's are replaced with ``X'' and 0's by empty spaces.\label{fig:sierpinski}}
\end{center}
\end{figure}

For the purpose of completeness and to explain the further steps we will include the proof of this fact here. An important observation behind the structure of the Sierpinski triangle is that the orbit matrix $M(10^{2^{n+1}-1},b)$ is a square $2^{n+1}\times2^{n+1}$ matrix that has the following block decomposition:

\def\arraystretch{1.5}
\begin{equation}
\label{eq:decomposition}
M(10^{2^{n+1}-1},b)=\left[\begin{array}{c|c}
M(10^{2^{n}-1},b)&0\\
\hline
M(10^{2^{n}-1},b)&M(10^{2^{n}-1},b)
\end{array}
\right].
\end{equation}
\def\arraystretch{1}
We prove the above decomposition by induction on $n$. The base of induction is satisfied since $M(1,b)=[1]$ and
\[M(10,b)=\left[\begin{array}{c|c}
1&0\\
\hline
1&1
\end{array}
\right]\]
The induction step is proved as follows. Assume that $|\Orb_b(10^{2^{n}-1})|=2^n$, and hence $M(10^{2^{n}-1},b)$ is a square $2^n\times2^n$ matrix.

First of all, since when $i\leq 2^n$ the expansion of $(1+t)^i$ does not have terms of degree greater than $2^n$, we immediately conclude that the upper right corner of the matrix~\eqref{eq:decomposition} is a $2^n\times2^n$ zero matrix. And by definition of the orbit matrix we will see exactly $M(10^{2^{n}-1},b)$ in the left top corner.

Further, since $(1+t)^{2^n}=1+t^{2^n}$ in $(\Z/2\Z)[[t]]$, we have
\[(1+t)^{2^n+i}=(1+t^{2^n})(1+t)^i=(1+t)^i+t^{2^n}(1+t)^i.\]
When $0\leq i< 2^{n}$ the term $(1+t)^i$ will reproduce the orbit of $10^{2^{n}-1}$ in the bottom left corner of the orbit matrix in~\eqref{eq:decomposition}, while the term $t^{2^n}(1+t)^i$ will reproduce the same orbit shifted to the right by $2^n$ positions, thus filling the bottom right corner of $M(10^{2^{n+1}-1},b)$. Furthermore, $b^{2^{n+1}}(10^{2^{n+1}-1})=10^{2^{n+1}-1}$ since
\[(1+t)^{2^{n+1}}=1+t^{2^{n+1}}\equiv 1\mod t^{2^{n+1}}.\]
Therefore, by the induction assumption, the size of $\Orb_b(10^{2^{n+1}-1})$ equals to $2^{n+1}$, which implies that $M(10^{2^{n+1}-1},b)$ is a square $2^{n+1}\times2^{n+1}$ matrix. In particular, this size agrees with the statement of the lemma.

Finally, for $2^{n}-1<k\leq 2^{n+1}-1$ the orbit $\Orb_b(10^k)$ has the same size as the orbit $\Orb_b(10^{2^{n+1}-1})$ since by the decomposition~\eqref{eq:decomposition} the only line beginning with $10^k$ in the orbit matrix $M(10^{2^{n+1}-1},b)$ is the first one.
\end{proof}

\begin{lemma}
\label{lem:orb1w}
The size of the orbit $\Orb_b(0^i1w)$ of the vertex $0^i1w$ under the action of $b$ is $\displaystyle{2^{[\log_2|w|]+1}}$.
\end{lemma}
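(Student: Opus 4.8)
The plan is to reduce this statement to Lemma~\ref{lem:orb10} through the power series description of the action of $b$. Write $k=|w|$ and let $n=|0^i1w|=i+1+k$ be the level on which the vertex $0^i1w$ lives. Under the identification of words of length $n$ with polynomials modulo $t^n$, the vertex $0^i1w$ corresponds to $t^i p(t)$, where $p(t)=1+t\widetilde{w}(t)$ and $\widetilde{w}$ is the polynomial associated with the word $w$. The point to record is that $p$ has degree at most $k$ and $p(0)=1$, so $p$ is a unit in $(\Z/2\Z)[[t]]$, hence also a unit modulo $t^N$ for every $N$.

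Next I would observe that the size of $\Orb_b(0^i1w)$ equals the multiplicative order of $(1+t)$ acting by multiplication on $t^i p(t)$ inside $(\Z/2\Z)[[t]]/(t^n)$, i.e. the least $m\geq 1$ with $\bigl((1+t)^m-1\bigr)\,t^i p(t)\equiv 0 \pmod{t^{n}}$. Since multiplication by $t^i$ merely shifts the coefficients of a power series, this is equivalent to $\bigl((1+t)^m-1\bigr)\,p(t)\equiv 0 \pmod{t^{k+1}}$; and since $p$ is a unit modulo $t^{k+1}$, one may cancel it, so the condition becomes simply $(1+t)^m\equiv 1 \pmod{t^{k+1}}$.

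Finally, I would note that $(1+t)^m\equiv 1 \pmod{t^{k+1}}$ is precisely the condition governing the size of $\Orb_b(10^k)$: the vertex $10^k$ corresponds to the constant polynomial $1$ modulo $t^{k+1}$, and its orbit under $b$ is the orbit of $1$ under multiplication by $(1+t)$ in $(\Z/2\Z)[[t]]/(t^{k+1})$. Hence $|\Orb_b(0^i1w)|=|\Orb_b(10^k)|$, which by Lemma~\ref{lem:orb10} equals $2^{[\log_2 k]+1}=2^{[\log_2|w|]+1}$.

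I do not expect any real obstacle here, since the substantive combinatorial content (the Sierpinski structure and the resulting orbit size) was already extracted in Lemma~\ref{lem:orb10}. The only points requiring a little care are the bookkeeping of the truncation level — the word $0^i1w$ sits on level $i+1+|w|$, so after factoring out $t^i$ one passes to working modulo $t^{|w|+1}$ — and the remark that $p(0)=1$ makes $p$ invertible and hence cancellable; one should also tacitly assume $w$ is nonempty so that $[\log_2|w|]$ is defined.
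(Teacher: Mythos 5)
Your proof is correct and follows essentially the same route as the paper's: reduce the orbit size to the least $m$ with $(1+t)^m\equiv 1\pmod{t^{|w|+1}}$ by cancelling the unit $1+t\widetilde{w}(t)$ (the paper phrases the cancellation in terms of the lowest-degree nonzero term), then invoke Lemma~\ref{lem:orb10}. The only cosmetic difference is that the paper strips the prefix $0^i$ via the wreath recursion $b^n=(b^n,a^n)$ rather than by factoring out $t^i$ from the power series; the two reductions are equivalent.
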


\begin{proof}
First of all, since $b^n=(b^n,a^n)$, we have $b^n(0^i1w)=0^ib^n(1w)$. Therefore
\[|\Orb_b(0^i1w)|=|\Orb_b(1w)|\]
and we can assume that $i=0$.

The vertex $1w=1a_1a_2\ldots a_{|w|}$ corresponds to the power series (which is, in fact, a polynomial) $f(t)=1+tg(t)=1+a_1t+a_2t^2+\cdots+a_{|w|}t^{|w|}\in(\Z/2\Z)[[t]]$ for some polynomial $g(t)$. Therefore, the series corresponding to $b^n(1w)$ has the form $(1+t)^n(1+tg(t))$. The size of the orbit of $1w$ then is equal to $N-1$, where $N>1$ is the smallest number such that
\begin{equation}
\label{eq:series}
(1+t)^N(1+tg(t))\equiv(1+tg(t)) \mod t^{|w|+2}
\end{equation}
which is equivalent to
\[(1+tg(t))(1+(1+t)^N)\equiv 0\mod t^{|w|+2}.\]
The last equality holds true if and only if $(1+(1+t)^N)\equiv 0\mod t^{|w|+2}$ as otherwise the smallest degree non-zero term in $(1+(1+t)^N)\mod t^{|w|+2}$ would produce a non-zero term in the lefthand side of~\eqref{eq:series}. Therefore, the smallest $N$ satisfying~\eqref{eq:series} is equal to the smallest $N$ for which
\[(1+t)^N\equiv 1\mod t^{|w|+2},\]
which, by the above argument, equals to the size of the orbit $\Orb_b(10^{|w|})$. Application of Lemma~\ref{lem:orb10} finishes the proof.
\end{proof}

\begin{corollary}
\label{cor:orb_a}
The size of the orbit $\Orb_a(w)$ of the vertex $w$ under the action of $a$ is $\displaystyle{2^{[\log_2|w|]+1}}$.
\end{corollary}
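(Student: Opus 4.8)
The plan is to reduce the computation of $|\Orb_a(w)|$ to the computation of $|\Orb_b(\cdot)|$ already carried out in Lemmas~\ref{lem:orb10} and~\ref{lem:orb1w}, by exploiting the power-series description of the action. Recall that under the identification of finite words with polynomials in $(\Z/2\Z)[t]$, the word $w=a_0a_1\ldots a_{|w|-1}$ of length $|w|$ corresponds to a polynomial $f(t)$ of degree at most $|w|-1$, regarded modulo $t^{|w|}$, and the action of $a$ on $f$ is $(a(f))(t)=(1+t)f(t)+1$. Iterating, $a^n(f)$ corresponds to $(1+t)^n f(t) + \bigl(1+(1+t)+\cdots+(1+t)^{n-1}\bigr) = (1+t)^n f(t) + \frac{(1+t)^n+1}{t}$ (the geometric sum in characteristic $2$), all taken modulo $t^{|w|}$. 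Hence $a^n(f)=f$ modulo $t^{|w|}$ if and only if $\bigl((1+t)^n+1\bigr)\bigl(f(t)+\tfrac1t\bigr)\equiv 0 \bmod t^{|w|}$, where $f(t)+\tfrac1t$ is shorthand for the Laurent series whose product with $t$ is $tf(t)+1$; multiplying through by $t$, the condition becomes $\bigl((1+t)^n+1\bigr)\bigl(tf(t)+1\bigr)\equiv 0\bmod t^{|w|+1}$.

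The key step is then to observe that $tf(t)+1$ has constant term $1$, hence is a unit in $(\Z/2\Z)[[t]]$, so the displayed congruence is equivalent to $(1+t)^n+1\equiv 0\bmod t^{|w|+1}$, i.e. $(1+t)^n\equiv 1\bmod t^{|w|+1}$. This is exactly the condition that appeared (with $|w|$ in place of $|w|-1$, i.e. with the shift matching) in the proof of Lemma~\ref{lem:orb1w}: the smallest such $N>1$ equals $|\Orb_b(10^{|w|-1})|$, which by Lemma~\ref{lem:orb10} is $2^{[\log_2(|w|-1)]+1}$. A small bookkeeping check on the length index is needed here — depending on whether we count the orbit of a length-$|w|$ word as living modulo $t^{|w|}$ or $t^{|w|+1}$ — so I would state the normalization carefully and confirm that the exponent comes out to $2^{[\log_2|w|]+1}$ as claimed; for $|w|$ not a power of $2$ this matches $2^{[\log_2(|w|-1)]+1}$, and the edge cases $|w|=2^m$ should be checked directly against the Sierpiński block decomposition~\eqref{eq:decomposition}.

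The only genuine subtlety — and the step I expect to require the most care — is the invertibility argument: one must be sure that $tf(t)+1$ is invertible \emph{in the relevant quotient} $(\Z/2\Z)[t]/(t^{|w|+1})$ (it is, since its constant term is the unit $1$), so that cancelling it does not lose solutions $n$. This is the same mechanism used in Lemma~\ref{lem:orb1w}, where the factor $1+tg(t)$ was cancelled, so the corollary really is immediate once the action of $a$ is rewritten in the additive-plus-multiplicative form above; I would simply remark that the argument of Lemma~\ref{lem:orb1w} applies verbatim after this rewriting, rather than repeating it. No new estimates are needed beyond Lemma~\ref{lem:orb10}.
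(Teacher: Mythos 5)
Your route is genuinely different from the paper's. The paper's proof is a one-line reduction: the wreath recursion $b=(b,a)$ gives $b(1w)=1a(w)$, hence $b^n(1w)=1a^n(w)$ and $|\Orb_a(w)|=|\Orb_b(1w)|$, which equals $2^{[\log_2|w|]+1}$ by Lemma~\ref{lem:orb1w} with $i=0$. You instead carry out the power-series computation for $a$ directly; the geometric-sum identity $a^n(f)=(1+t)^nf+\bigl((1+t)^n+1\bigr)/t$ and the cancellation of the unit $tf(t)+1$ are both correct, and they reduce the problem to finding the multiplicative order of $1+t$ in $(\Z/2\Z)[t]/(t^{|w|+1})$. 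That is a perfectly sound, self-contained argument, just longer than necessary given that Lemma~\ref{lem:orb1w} already encapsulates this cancellation.

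However, your final bookkeeping step is off by one, and the issue you flag is not merely an edge case to be checked: as written, your formula is wrong whenever $|w|$ is a power of $2$. In the proof of Lemma~\ref{lem:orb10}, $|\Orb_b(10^k)|$ is the least $N\geq 1$ with $(1+t)^N\equiv1\bmod t^{k+1}$ (the word $10^k$ has length $k+1$). Your condition is $(1+t)^n\equiv1\bmod t^{|w|+1}$, so the relevant orbit is $\Orb_b(10^{|w|})$, not $\Orb_b(10^{|w|-1})$, and Lemma~\ref{lem:orb10} then gives exactly $2^{[\log_2|w|]+1}$, with no discrepancy and no need to appeal to~\eqref{eq:decomposition} for the cases $|w|=2^m$. (Sanity check: for $w=00$ the orbit under $a$ is $\{00,10,01,11\}$ of size $4=2^{[\log_2 2]+1}$, whereas $|\Orb_b(10^{|w|-1})|=|\Orb_b(10)|=2$.) With that single correction your proof is complete.
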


\begin{proof}
Follows immediately from the identity $b(1w)=1a(w)$ and Lemma~\ref{lem:orb1w}.
\end{proof}

\begin{remark} It follows immediately from the equality $(b(f))(t)=(1+t)f(t)$ that
\[b(a_0a_1a_2\ldots)=a_0(a_1+a_0)(a_2+a_1)(a_3+a_2)\ldots,\]
where the addition is performed $\mod 2$. This implies that the orbit of $w=a_0a_1a_2\ldots$ under the action of $b$, viewed as an infinite matrix with $ij$-th entry containing the $j$-th symbol of $b^i(w)$, can be obtained as the sum of the corresponding matrices for the orbits of the vertices of the form $0^{l-1}10^\infty$, where the sum is taken over all $l$ for which $a_l=1$. In other words, we sum up together $\mod 2$ Sierpinski triangles that grow from positions in which $a_l=1$ (see Figure~\ref{fig:sierpinski2}).
\end{remark}

\begin{figure}
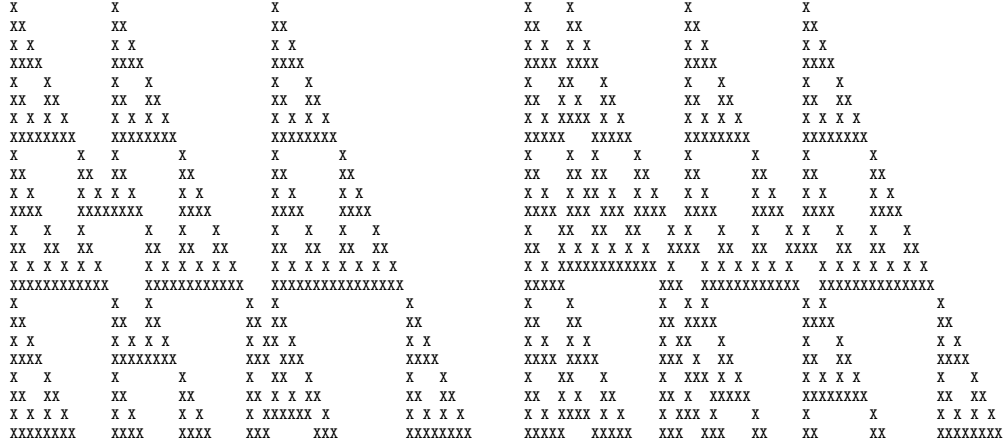

\begin{center}
\tiny{
\begin{verbatim}
                X           X                  X                             X    X             X             X
                XX          XX                 XX                            XX   XX            XX            XX
                X X         X X                X X                           X X  X X           X X           X X
                XXXX        XXXX               XXXX                          XXXX XXXX          XXXX          XXXX
                X   X       X   X              X   X                         X   XX   X         X   X         X   X
                XX  XX      XX  XX             XX  XX                        XX  X X  XX        XX  XX        XX  XX
                X X X X     X X X X            X X X X                       X X XXXX X X       X X X X       X X X X
                XXXXXXXX    XXXXXXXX           XXXXXXXX                      XXXXX   XXXXX      XXXXXXXX      XXXXXXXX
                X       X   X       X          X       X                     X    X  X    X     X       X     X       X
                XX      XX  XX      XX         XX      XX                    XX   XX XX   XX    XX      XX    XX      XX
                X X     X X X X     X X        X X     X X                   X X  X XX X  X X   X X     X X   X X     X X
                XXXX    XXXXXXXX    XXXX       XXXX    XXXX                  XXXX XXX XXX XXXX  XXXX    XXXX  XXXX    XXXX
                X   X   X       X   X   X      X   X   X   X                 X   XX  XX  XX   X X   X   X   X X   X   X   X
                XX  XX  XX      XX  XX  XX     XX  XX  XX  XX                XX  X X X X X X  XXXX  XX  XX  XXXX  XX  XX  XX
                X X X X X X     X X X X X X    X X X X X X X X               X X XXXXXXXXXXXX X   X X X X X X   X X X X X X X
                XXXXXXXXXXXX    XXXXXXXXXXXX   XXXXXXXXXXXXXXXX              XXXXX           XXX  XXXXXXXXXXXX  XXXXXXXXXXXXXX
                X           X   X           X  X               X             X    X          X  X X           X X             X
                XX          XX  XX          XX XX              XX            XX   XX         XX XXXX          XXXX            XX
                X X         X X X X         X XX X             X X           X X  X X        X XX   X         X   X           X X
                XXXX        XXXXXXXX        XXX XXX            XXXX          XXXX XXXX       XXX X  XX        XX  XX          XXXX
                X   X       X       X       X  XX  X           X   X         X   XX   X      X  XXX X X       X X X X         X   X
                XX  XX      XX      XX      XX X X XX          XX  XX        XX  X X  XX     XX X  XXXXX      XXXXXXXX        XX  XX
                X X X X     X X     X X     X XXXXXX X         X X X X       X X XXXX X X    X XXX X    X     X       X       X X X X
                XXXXXXXX    XXXX    XXXX    XXX     XXX        XXXXXXXX      XXXXX   XXXXX   XXX  XXX   XX    XX      XX      XXXXXXXX
\end{verbatim}
}
\caption{Initial part of the orbit of a random vertex under the action of powers of $b$.\label{fig:sierpinski2}}
\end{center}
\end{figure}

\begin{proof}[Proof of Theorem~\ref{thm:lamplighter}]
Item (a) immediately follows from Corollary~\ref{cor:orb_a} and item (b) is an obvious consequence of the wreath recursion decomposition $b=(b,a)$.
\end{proof}

\begin{figure}[h]
\begin{center}
\epsfig{file=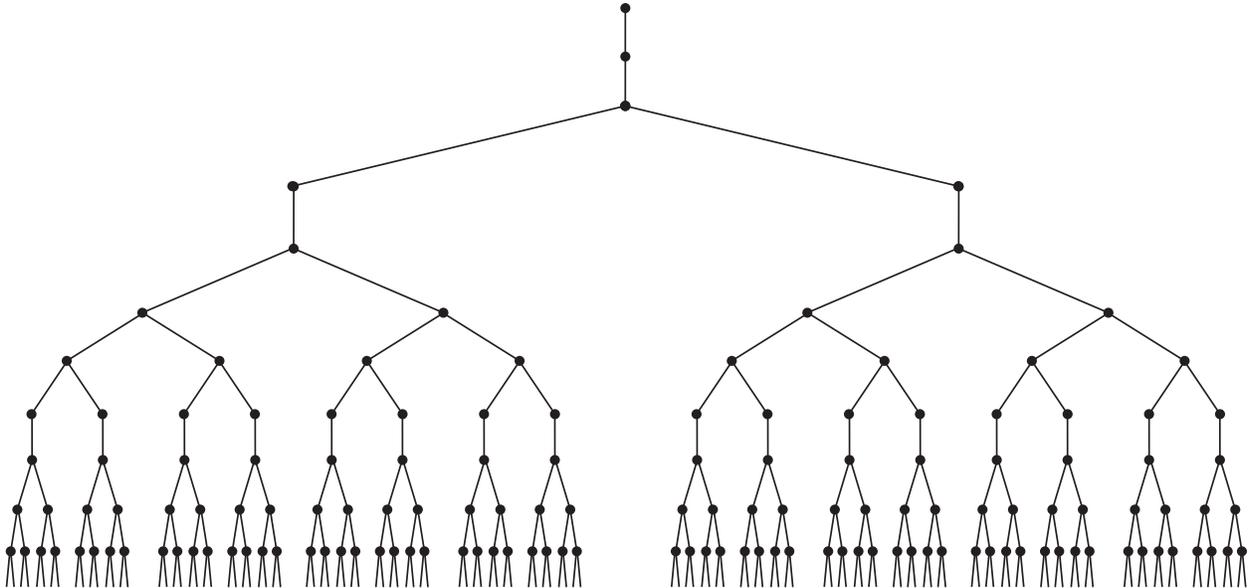,width=\textwidth}
\caption{Orbit tree $T_{\langle a\rangle}$ of the generator $a$ of the lamplighter group.\label{fig:orbit_tree_a}}
\end{center}
\end{figure}

\begin{figure}[h]
\begin{center}
\epsfig{file=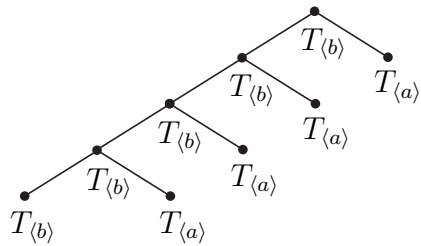}
\caption{Orbit tree $T_{\langle b\rangle}$ of the generator $b$ of the lamplighter group.\label{fig:orbit_tree_b}}
\end{center}
\end{figure}

In the end of the paper we would like to bring the attention to the fractal nature of orbit matrices for elements of automaton groups observed in Figure~\ref{fig:sierpinski} that has not been studied before. We conclude the paper with the following example.

\begin{example}
\label{ex:ll2}
Consider a group $G$ generated by a 4-state automaton with the following wreath recursion:
\[\begin{array}{lcl}
a&=&(d,d)\sigma,\\
b&=&(c,c),\\
c&=&(a,b),\\
d&=&(b,a).
\end{array}\]
This group has been studied in~\cite{klimann_ps:orbit_automata} where it was proved, in particular, that the element $ac$ has infinite order. Very recently it was shown by Sidki and the second author that the whole group is isomorphic to the extension of index 2 of a rank 2 lamplighter group $(\Z/2\Z)^2\wr\Z$. A part of the orbit matrix of $0^\infty$ with respect to the element $ac$ is shown in Figure~\ref{fig:lamp_orbit} and also clearly has a self-similar pattern.
\end{example}

\begin{figure}
\begin{center}
\tiny{
\begin{verbatim}
               X  XXX  X  XXX  X  XXX  X  XXX  X  XXX  X  XXX  X  XXX  X  XXX  X  XXX  X  XXX  X  XXX  X  XXX  X  XXX  X  XXX  X  X
                X XXX X    X    X XXX X    X    X XXX X    X    X XXX X    X    X XXX X    X    X XXX X    X    X XXX X    X    X X
               XXXX  X XX   X  XXXX  X XX   X  XXXX  X XX   X  XXXX  X XX   X  XXXX  X XX   X  XXXX  X XX   X  XXXX  X XX   X  XXXX
                  XX X   XX X X X  XXXX XX        XX X   XX X X X  XXXX XX        XX X   XX X X X  XXXX XX        XX X   XX X X X
               X   XX   X XXXXXXXX XXXX  XXXX  X   XX   X XXXXXXXX XXXX  XXXX  X   XX   X XXXXXXXX XXXX  XXXX  X   XX   X XXXXXXXX
                X    XX X  X X X XX X  XXX X    X    XX X  X X X XX X  XXX X    X    XX X  X X X XX X  XXX X    X    XX X  X X X XX
               XXX  X XXXX X  XXX XXXX XXX  X  XXX  X XXXX X  XXX XXXX XXX  X  XXX  X XXXX X  XXX XXXX XXX  X  XXX  X XXXX X  XXX X
                      XXXX  X XXX  X XX X X X X X X X  X XX    X  XXXX                XXXX  X XXX  X XX X X X X X X X  X XX    X  X
               X  XXX XXXX XXXX  XXXX XXXXXXXXXXXXXXXX X   XX   X XXXX X  XXX  X  XXX XXXX XXXX  XXXX XXXXXXXXXXXXXXXX X   XX   X X
                X XXX  X XX X  XXX X  XXXXXXXXXXXXXXXX  X    XX X  X XX    X    X XXX  X XX X  XXX X  XXXXXXXXXXXXXXXX  X    XX X
               XXXX  XXXX XXXX XXX  X XXXXXXXXXXXXXXXX XXX  X XXXX X   XX   X  XXXX  XXXX XXXX XXX  X XXXXXXXXXXXXXXXX XXX  X XXXX
                  XX XXXX  X XX X X X  X X X X X X X XX X X X  X XX   X XX        XX XXXX  X XX X X X  X X X X X X X XX X X X  X XX
               X   XX X  XXXX XXXXXXXX X  XXX  X  XXX XXXXXXXX X   XX X  XXXX  X   XX X  XXXX XXXXXXXX X  XXX  X  XXX XXXXXXXX X
                X    X XXX X  XXXXXXXX  X XXX X    X  XXXXXXXX  X    X XXX X    X    X XXX X  XXXXXXXX  X XXX X    X  XXXXXXXX  X
               XXX  X  XXX  X XXXXXXXX XXXX  X XX   X XXXXXXXX XXX  X  XXX  X  XXX  X  XXX  X XXXXXXXX XXXX  X XX   X XXXXXXXX XXX
                              XXXXXXXX    XX X   XX X  X X X XX X X X X X X X X X X X X X X X  X X X XX X  XXXX XX    XXXXXXXX
               X  XXX  X  XXX XXXXXXXX X   XX   X XXXX X  XXX XXXXXXXXXXXXXXXXXXXXXXXXXXXXXXXX X  XXX XXXX XXXX  XXXX XXXXXXXX X  X
                X XXX X    X  XXXXXXXX  X    XX X  X XX    X  XXXXXXXXXXXXXXXXXXXXXXXXXXXXXXXX  X XXX  X XX X  XXX X  XXXXXXXX  X X
               XXXX  X XX   X XXXXXXXX XXX  X XXXX X   XX   X XXXXXXXXXXXXXXXXXXXXXXXXXXXXXXXX XXXX  XXXX XXXX XXX  X XXXXXXXX XXXX
                  XX X   XX X  X X X XX X X X  X XX   X XX    XXXXXXXXXXXXXXXXXXXXXXXXXXXXXXXX    XX XXXX  X XX X X X  X X X XX X
               X   XX   X XXXX X  XXX XXXXXXXX X   XX X  XXXX XXXXXXXXXXXXXXXXXXXXXXXXXXXXXXXX X   XX X  XXXX XXXXXXXX X  XXX XXXX
                X    XX X  X XX    X  XXXXXXXX  X    X XXX X  XXXXXXXXXXXXXXXXXXXXXXXXXXXXXXXX  X    X XXX X  XXXXXXXX  X XXX  X XX
               XXX  X XXXX X   XX   X XXXXXXXX XXX  X  XXX  X XXXXXXXXXXXXXXXXXXXXXXXXXXXXXXXX XXX  X  XXX  X XXXXXXXX XXXX  XXXX X
                      XXXX  X    XX X  X X X XX X X X X X X X  X X X X X X X X X X X X X X X XX X X X X X X X  X X X XX X  XXX X  X
               X  XXX XXXX XXX  X XXXX X  XXX XXXXXXXXXXXXXXXX X  XXX  X  XXX  X  XXX  X  XXX XXXXXXXXXXXXXXXX X  XXX XXXX XXX  X X
                X XXX  X XX X X X  X XX    X  XXXXXXXXXXXXXXXX  X XXX X    X    X XXX X    X  XXXXXXXXXXXXXXXX  X XXX  X XX X X X
               XXXX  XXXX XXXXXXXX X   XX   X XXXXXXXXXXXXXXXX XXXX  X XX   X  XXXX  X XX   X XXXXXXXXXXXXXXXX XXXX  XXXX XXXXXXXX
                  XX XXXX  X X X XX   X XX    XXXXXXXXXXXXXXXX    XX X   XX X X X  XXXX XX    XXXXXXXXXXXXXXXX    XX XXXX  X X X XX
               X   XX X  XXXX  X   XX X  XXXX XXXXXXXXXXXXXXXX X   XX   X XXXXXXXX XXXX  XXXX XXXXXXXXXXXXXXXX X   XX X  XXXX  X
                X    X XXX X    X    X XXX X  XXXXXXXXXXXXXXXX  X    XX X  X X X XX X  XXX X  XXXXXXXXXXXXXXXX  X    X XXX X    X
               XXX  X  XXX  X  XXX  X  XXX  X XXXXXXXXXXXXXXXX XXX  X XXXX X  XXX XXXX XXX  X XXXXXXXXXXXXXXXX XXX  X  XXX  X  XXX
                                              XXXXXXXXXXXXXXXX        XXXX  X XXX  X XX X X X  X X X X X X X XX X X X X X X X X X X
               X  XXX  X  XXX  X  XXX  X  XXX XXXXXXXXXXXXXXXX X  XXX XXXX XXXX  XXXX XXXXXXXX X  XXX  X  XXX XXXXXXXXXXXXXXXXXXXXX
                X XXX X    X    X XXX X    X  XXXXXXXXXXXXXXXX  X XXX  X XX X  XXX X  XXXXXXXX  X XXX X    X  XXXXXXXXXXXXXXXXXXXXX
               XXXX  X XX   X  XXXX  X XX   X XXXXXXXXXXXXXXXX XXXX  XXXX XXXX XXX  X XXXXXXXX XXXX  X XX   X XXXXXXXXXXXXXXXXXXXXX
                  XX X   XX X X X  XXXX XX    XXXXXXXXXXXXXXXX    XX XXXX  X XX X X X  X X X XX X  XXXX XX    XXXXXXXXXXXXXXXXXXXXX
               X   XX   X XXXXXXXX XXXX  XXXX XXXXXXXXXXXXXXXX X   XX X  XXXX XXXXXXXX X  XXX XXXX XXXX  XXXX XXXXXXXXXXXXXXXXXXXXX
                X    XX X  X X X XX X  XXX X  XXXXXXXXXXXXXXXX  X    X XXX X  XXXXXXXX  X XXX  X XX X  XXX X  XXXXXXXXXXXXXXXXXXXXX
               XXX  X XXXX X  XXX XXXX XXX  X XXXXXXXXXXXXXXXX XXX  X  XXX  X XXXXXXXX XXXX  XXXX XXXX XXX  X XXXXXXXXXXXXXXXXXXXXX
                      XXXX  X XXX  X XX X X X  X X X X X X X XX X X X X X X X  X X X XX X  XXX X  XXXX        XXXXXXXXXXXXXXXXXXXXX
               X  XXX XXXX XXXX  XXXX XXXXXXXX X  XXX  X  XXX XXXXXXXXXXXXXXXX X  XXX XXXX XXX  X XXXX X  XXX XXXXXXXXXXXXXXXXXXXXX
\end{verbatim}
}
\caption{Initial part of the orbit of $0^\infty$ under the action of powers of $ac\in G$ from Example~\ref{ex:ll2}, where 1's are replaced with ``X'' and 0's by empty spaces.\label{fig:lamp_orbit}}
\end{center}
\end{figure}

\clearpage

\begin{thebibliography}{BGK{\etalchar{+}}08}

\bibitem[BGK{\etalchar{+}}08]{bondarenko_gkmnss:full_clas32_short}
I.~Bondarenko, R.~Grigorchuk, R.~Kravchenko, Y.~Muntyan, V.~Nekrashevych,
  D.~Savchuk, and Z.~\v{S}uni\'{c}.
\newblock Classification of groups generated by $3$-state automata over
  $2$-letter alphabet.
\newblock {\em Algebra Discrete Math.}, (1):1--163, 2008.
\newblock (available at \emph{http://arxiv.org/abs/0803.3555}).

\bibitem[BGSN14]{benli_gn:universal}
M.G. Benli, R.~Grigorchuk, and T.~Smirnova-Nagnibeda.
\newblock Universal groups of intermediate growth.
\newblock In preparation, 2014.

\bibitem[BKS06]{buescu_s:liapunov_stability06}
Jorge Buescu, Marcin Kulczycki, and Ian Stewart.
\newblock Liapunov stability and adding machines revisited.
\newblock {\em Dyn. Syst.}, 21(3):379--384, 2006.

\bibitem[BS95]{buescu_s:liapunov_stability95}
Jorge Buescu and Ian Stewart.
\newblock Liapunov stability and adding machines.
\newblock {\em Ergodic Theory Dynam. Systems}, 15(2):271--290, 1995.

\bibitem[BS07]{bondarenko_s:sushch}
Ievgen~V. Bondarenko and Dmytro~M. Savchuk.
\newblock On {S}ushchansky {$p$}-groups.
\newblock {\em Algebra Discrete Math.}, (2):22--42, 2007.
\newblock (available at \emph{http://arxiv.org/abs/math/0612200}).

\bibitem[Buf14]{bufetov:ergodic_decomposition14}
A.~I. Bufetov.
\newblock Ergodic decomposition for measures quasi-invariant under {B}orel
  actions of inductively compact groups.
\newblock {\em Mat. Sb.}, 205(2):39--70, 2014.

\bibitem[ECH{\etalchar{+}}92]{epstein_chpt:word_processing_in_groups92}
David B.~A. Epstein, James~W. Cannon, Derek~F. Holt, Silvio V.~F. Levy,
  Michael~S. Paterson, and William~P. Thurston.
\newblock {\em Word processing in groups}.
\newblock Jones and Bartlett Publishers, Boston, MA, 1992.

\bibitem[Far62]{farrell:representations_invar_measures62}
R.~H. Farrell.
\newblock Representation of invariant measures.
\newblock {\em Illinois J. Math.}, 6:447--467, 1962.

\bibitem[Fin47]{fine:binomial47}
N.~J. Fine.
\newblock Binomial coefficients modulo a prime.
\newblock {\em Amer. Math. Monthly}, 54:589--592, 1947.

\bibitem[Fom50]{fomin:measures1950}
S.~V. Fomin.
\newblock On measures invariant under certain groups of transformations.
\newblock {\em Izvestiya Akad. Nauk SSSR. Ser. Mat.}, 14:261--274, 1950.

\bibitem[GdlH14]{grigorch_h:amenability_ergodic_top_groups}
Rostislav Grigorchuk and Pierre de~la Harpe.
\newblock Amenability and ergodic properties of topological groups: from
  {B}ogolyubov onwards.
\newblock Preprint: arxiv:1404.7030, 2014.

\bibitem[GNS00]{gns00:automata}
R.~I. Grigorchuk, V.~V. Nekrashevich, and V.~I. Sushchanski{\u\i}.
\newblock Automata, dynamical systems, and groups.
\newblock {\em Tr. Mat. Inst. Steklova}, 231(Din. Sist., Avtom. i Beskon.
  Gruppy):134--214, 2000.

\bibitem[GNS01]{gawron_ns:conjugation}
Piotr~W. Gawron, Volodymyr~V. Nekrashevych, and Vitaly~I. Sushchansky.
\newblock Conjugation in tree automorphism groups.
\newblock {\em Internat. J. Algebra Comput.}, 11(5):529--547, 2001.

\bibitem[Gri80]{grigorch:burnside}
R.~I. Grigorchuk.
\newblock On {B}urnside's problem on periodic groups.
\newblock {\em Funktsional. Anal. i Prilozhen.}, 14(1):53--54, 1980.

\bibitem[Gri84]{grigorch:degrees}
R.~I. Grigorchuk.
\newblock Degrees of growth of finitely generated groups and the theory of
  invariant means.
\newblock {\em Izv. Akad. Nauk SSSR Ser. Mat.}, 48(5):939--985, 1984.

\bibitem[Gri00]{grigorch:jibranch}
R.~I. Grigorchuk.
\newblock Just infinite branch groups.
\newblock In {\em New horizons in pro-$p$ groups}, volume 184 of {\em Progr.
  Math.}, pages 121--179. Birkh\"auser Boston, Boston, MA, 2000.

\bibitem[Gri05]{grigorch:solved}
Rostislav Grigorchuk.
\newblock Solved and unsolved problems around one group.
\newblock In {\em Infinite groups: geometric, combinatorial and dynamical
  aspects}, volume 248 of {\em Progr. Math.}, pages 117--218. Birkh\"auser,
  Basel, 2005.

\bibitem[Gri11]{grigorch:dynamics11eng}
R.~I Grigorchuk.
\newblock Some topics in the dynamics of group actions on rooted trees.
\newblock {\em Proc. of Steklov Inst. of Math.}, 273:64--175, 2011.

\bibitem[G{\v{S}}08]{grigorch_s:hanoi_spectrum}
Rostislav Grigorchuk and Zoran {\v{S}}uni{\'c}.
\newblock Schreier spectrum of the {H}anoi {T}owers group on three pegs.
\newblock In {\em Analysis on graphs and its applications}, volume~77 of {\em
  Proc. Sympos. Pure Math.}, pages 183--198. Amer. Math. Soc., Providence, RI,
  2008.

\bibitem[G{\.Z}01]{grigorch_z:lamplighter}
Rostislav~I. Grigorchuk and Andrzej {\.Z}uk.
\newblock The lamplighter group as a group generated by a 2-state automaton,
  and its spectrum.
\newblock {\em Geom. Dedicata}, 87(1-3):209--244, 2001.

\bibitem[G{\.Z}02]{grigorch_z:basilica}
Rostislav~I. Grigorchuk and Andrzej {\.Z}uk.
\newblock On a torsion-free weakly branch group defined by a three state
  automaton.
\newblock {\em Internat. J. Algebra Comput.}, 12(1-2):223--246, 2002.

\bibitem[Kal48]{kalou:la_structure}
L{\'e}o Kaloujnine.
\newblock La structure des {$p$}-groupes de {S}ylow des groupes sym\'etriques
  finis.
\newblock {\em Ann. Sci. \'Ecole Norm. Sup. (3)}, 65:239--276, 1948.

\bibitem[Kli13]{klimann:finiteness}
Ines Klimann.
\newblock {The finiteness of a group generated by a 2-letter
  invertible-reversible Mealy automaton is decidable}.
\newblock In Natacha Portier and Thomas Wilke, editors, {\em 30th International
  Symposium on Theoretical Aspects of Computer Science (STACS 2013)}, volume~20
  of {\em Leibniz International Proceedings in Informatics (LIPIcs)}, pages
  502--513, Dagstuhl, Germany, 2013. Schloss Dagstuhl--Leibniz-Zentrum fuer
  Informatik.

\bibitem[KM04]{kechris_m:topics_in_orbit_equivalence04}
Alexander~S. Kechris and Benjamin~D. Miller.
\newblock {\em Topics in orbit equivalence}, volume 1852 of {\em Lecture Notes
  in Mathematics}.
\newblock Springer-Verlag, Berlin, 2004.

\bibitem[Kol77]{kolmogoroff:grundbegriffe}
A.~Kolmogoroff.
\newblock {\em Grundbegriffe der {W}ahrscheinlichkeitsrechnung}.
\newblock Springer-Verlag, Berlin-New York, 1977.
\newblock Reprint of the 1933 original.

\bibitem[KPS14a]{klimann_ps:3state}
Ines Klimann, Matthieu Picantin, and Dmytro Savchuk.
\newblock A connected 3-state reversible {M}ealy automaton cannot generate an
  infinite {B}urnside group.
\newblock Preprint: arxiv:1409.6142, 2014.

\bibitem[KPS14b]{klimann_ps:orbit_automata}
Ines Klimann, Matthieu Picantin, and Dmytro Savchuk.
\newblock Orbit automata as a new tool to attack the order problem in automaton
  groups.
\newblock Preprint: arxiv:1411.0158, 2014.

\bibitem[Mac64]{mackey:ergodic64}
George~W. Mackey.
\newblock Ergodic transformation groups with a pure point spectrum.
\newblock {\em Illinois J. Math.}, 8:593--600, 1964.

\bibitem[Nek05]{nekrash:self-similar}
Volodymyr Nekrashevych.
\newblock {\em Self-similar groups}, volume 117 of {\em Mathematical Surveys
  and Monographs}.
\newblock American Mathematical Society, Providence, RI, 2005.

\bibitem[Par05]{parthasarathy:prob_measures67}
K.~R. Parthasarathy.
\newblock {\em Probability measures on metric spaces}.
\newblock AMS Chelsea Publishing, Providence, RI, 2005.
\newblock Reprint of the 1967 original.

\bibitem[Roh49]{rokhlin:ergodic_decomp49}
V.~A. Rohlin.
\newblock Selected topics from the metric theory of dynamical systems.
\newblock {\em Uspehi Matem. Nauk (N.S.)}, 4(2(30)):57--128, 1949.

\bibitem[Sus79]{sushch:burnside}
V.~I. Sushchansky.
\newblock Periodic permutation $p$-groups and the unrestricted {Burnside}
  problem.
\newblock {\em DAN SSSR.}, 247(3):557--562, 1979.
\newblock (in Russian).

\bibitem[Var63]{varadarajan:groups_of_automorphisms63}
V.~S. Varadarajan.
\newblock Groups of automorphisms of {B}orel spaces.
\newblock {\em Trans. Amer. Math. Soc.}, 109:191--220, 1963.

\end{thebibliography}

\newcommand{\etalchar}[1]{$^{#1}$}
\def\cprime{$'$} \def\cprime{$'$} \def\cprime{$'$} \def\cprime{$'$}
  \def\cprime{$'$} \def\cprime{$'$} \def\cprime{$'$} \def\cprime{$'$}
  \def\cprime{$'$} \def\cprime{$'$} \def\cprime{$'$} \def\cprime{$'$}
  \def\cprime{$'$}

\end{document}